\documentclass[11pt]{article}
\usepackage{amsmath}
\usepackage{amssymb}
\usepackage{amsthm}
\usepackage[usenames]{color}
\usepackage{amscd}
\usepackage{dsfont}
\usepackage{indentfirst}

\usepackage[colorlinks=true,linkcolor=blue,filecolor=red,
citecolor=webgreen]{hyperref}
\definecolor{webgreen}{rgb}{0,.5,0}

\numberwithin{equation}{section}

\hoffset=-.7truein \voffset=-.6truein \textwidth=160mm
\textheight=215mm

\def\C{{\mathds{C}}}

\def\N{{\mathds{N}}}
\def\Z{{\mathds{Z}}}

\def\1{{\bf 1}}

\def\id{\operatorname{id}}
\def\Pr{\operatorname{Pr}}

\newtheorem{theorem}{Theorem}[section]

\newtheorem{lemma}[theorem]{Lemma}
\newtheorem{cor}[theorem]{Corollary}

\newtheorem{remark}{Remark}[section]

\begin{document}

\title{{\bf Menon-type identities again: A note on a paper by Li, Kim and Qiao} }
\author{Pentti Haukkanen \\ Faculty of Information Technology and Communication Sciences (ITC)\\
Tampere University, FI-33014 Finland\\ E-mail: {\tt pentti.haukkanen@tuni.fi} \\ \ \\
L\'aszl\'o T\'oth \\ Department of Mathematics, University of P\'ecs \\
Ifj\'us\'ag \'utja 6, 7624 P\'ecs, Hungary \\ E-mail: {\tt ltoth@gamma.ttk.pte.hu}}
\date{}
\maketitle

\centerline{Publ. Math. Debrecen {\bf 96} (2020), 487--502}

\begin{abstract} We give common generalizations of the Menon-type identities by Sivaramakrishnan (1969) and  Li, Kim, Qiao (2019).
Our general identities involve arithmetic functions of several variables, and also contain, as special cases, identities for gcd-sum type
functions. We point out a new Menon-type identity concerning the lcm function. We present a simple character free approach for the proof.
\end{abstract}

{\sl 2010 Mathematics Subject Classification}: 11A07, 11A25

{\sl Key Words and Phrases}: Menon's identity, gcd-sum function, arithmetic function of several variables, multiplicative function,
lcm function, polynomial with integer coefficients

\section{Introduction}

Menon's classical identity \cite{Men1965} states that for every $n\in \N:=\{1,2,\ldots \}$,
\begin{equation} \label{Menon_id}
M(n):= \sum_{\substack{a=1 \\ (a,n)=1}}^n (a-1,n) = \varphi(n) \tau(n),
\end{equation}
where $(a,n)$ stands for the greatest common divisor of $a$ and $n$, $\varphi(n)$ is Euler's totient function and
$\tau(n)=\sum_{d\mid n} 1$ is the divisor function.

Menon \cite{Men1965} proved this identity by three distinct methods, the first one being based on the Cauchy-Frobenius-Burnside lemma on group
actions. This method was used later by Sury \cite{Sur2009}, T\'oth \cite{Tot2011}, Li and Kim \cite{LiKim2017}, and other authors to
derive different generalizations and analogs of \eqref{Menon_id}. Number theoretic methods were also applied in several papers to deduce various
Menon-type identities. See, e.g., \cite{Hau2005,HW1996,HauWan1997,LiHuKimTaiw,LiKim2018,LKQ2019,Tot2018,Tot2019,Tot,WZJ2019,ZhaCao2017}.

It is less known and is not considered in the above mentioned papers the following old generalization, due to
Sivaramakrishnan \cite{Siv1969}, in a slightly different form:
\begin{equation} \label{Menon_id_Siv}
M(m,n,t):= \sum_{\substack{a=1 \\ (a,m)=1}}^t (a-1,n) = \frac{t\, \varphi(m)\tau(n)}{m} \prod_{p^\nu\mid \mid n_1}
\left(1-\frac{\nu}{(\nu+1)p} \right),
\end{equation}
where $m,n,t\in \N$ such that $m\mid t$, $n\mid t$ and $n_1=\max \{d\in \N: d\mid n, (d,m)=1\}$. If $m=n=t$, then
$M(n,n,n)=M(n)$, that is, \eqref{Menon_id_Siv} reduces to \eqref{Menon_id}. However, if $n\mid m$ and $t=m$, then it
follows from \eqref{Menon_id_Siv} that
\begin{equation*}
\sum_{\substack{a=1 \\ (a,m)=1}}^m (a-1,n) = \varphi(m) \tau(n),
\end{equation*}
which was recently obtained by Jafari and Madadi \cite[Cor.\ 2.2]{JafMad2017}, using group theoretic arguments,
without referring to the paper \cite{Siv1969}. It was pointed out by Sivaramakrishnan \cite{Siv1969} that if $t=[m,n]$, the least common
multiple of $m$ and $n$, then $M(m,n,[m,n])$ is a multiplicative function of two variables.

In a quite recent paper, Li, Kim and Qiao \cite[Th.\ 2.5]{LKQ2019} proved that for any integers
$n\ge 1$, $k\ge 0$, $\ell \ge 1$ one has
\begin{equation} \label{id_Debrecen_paper}
\sum_{\substack{1\le a,b_1,\ldots,b_k \le n \\ (a,n)=1}} (a^{\ell}-1,b_1,\ldots,b_k,n)=\varphi(n) (\id_k \ast \, C^{(\ell)})(n),
\end{equation}
where $\ast$ denotes the Dirichlet convolution, $\id_k(n)=n^k$ and $C^{(\ell)}(n)$ is the number of solutions of the congruence $x^{\ell}
\equiv 1$ (mod $n$) with $(x, n)=1$. Note that the condition $(x, n)=1$ can be omitted here.
For the proof they used properties of characters of finite abelian groups.
The case $k=0$ recovers certain identities given by the second author \cite{Tot2011}. If $k=0$ and $\ell=1$, then \eqref{id_Debrecen_paper}
reduces to \eqref{Menon_id}.

The sum $M(n)$ is related to the gcd-sum function, also known as Pillai's arithmetical function, given by
\begin{equation} \label{gcd_sum}
G(n):=\sum_{a=1}^n (a,n)= n \sum_{d\mid n} \frac{\varphi(d)}{d} \quad (n\in \N).
\end{equation}

A large number of different generalizations and analogs of the function $G(n)$ is presented in the literature. See, e.g.,
\cite{Hau2008,Tot2010}.

It is the goal of this paper to give common generalizations of the identities \eqref{Menon_id_Siv} and \eqref{id_Debrecen_paper},
and to present a simple character free approach for the proof. Our general identity, included in Theorem \ref{Theorem_main}, involves
arithmetic functions of several variables, and also contains, as a special case, identities for gcd-sum type functions, such as
identity \eqref{gcd_sum}. The identity of Theorem \ref{Theorem_g} is concerning arithmetic functions of a single variable.

We point out the following new Menon-type identity, which is another special case of our results. See Theorem \ref{Th_Menon_lcm} and Corollary
\ref{Cor_lcm_n}. If $n\in \N$, then
\begin{equation} \label{a_b_lcm}
\sum_{\substack{1\le a,b\le n\\ (a,n)=(b,n)=1}} [(a-1,n),(b-1,n)] =\varphi(n)^2 \prod_{p^{\nu}\mid\mid n}
\left(1+ 2\nu -\frac{p^{\nu}-1}{p^{\nu-1}(p-1)^2} \right).
\end{equation}

Note that identity \eqref{Menon_id_Siv} was generalized by Sita Ramaiah \cite[Th.\ 9.1]{Sit1978} in another
way, namely in terms of regular convolutions. Our results can further be generalized to regular convolutions and to $k$-reduced residue systems.
For the sake of brevity, we do not present the details. For appropriate material we refer to \cite{Coh1949,Coh1956,Sit1978}.

\section{Preliminaries}

\subsection{Arithmetic functions of several variables} \label{Sect_Prel_Arithm_Func}

Let $f, g:\N^k\to\C$ be arithmetic functions of $k$ variables. Their Dirichlet convolution is defined as
\begin{equation*}
(f\ast_k g)(n_1, \ldots, n_k) =\sum_{d_1\mid n_1,\ldots,d_k\mid n_k} f(d_1, \ldots, d_k)g(n_1/d_1,\ldots, n_k/d_k).
\end{equation*}

In the case $k=1$ we write simply $f\ast_1 g=f\ast g$.  The identity under $\ast_k$ is
\begin{equation*}
\delta_k(n_1, \ldots, n_k)=\delta(n_1)\cdots\delta(n_k),
\end{equation*}
where $\delta(1)=1$ and $\delta(n)=0$ for $n\ne 1$. An arithmetic function $f$ of $k$ variables possesses an inverse under $\ast_k$
if and only if $f(1,\ldots, 1)\ne 0$.  Let $\zeta_k(n_1, \ldots, n_k)$ be defined as
$\zeta_k(n_1, \ldots, n_k)=1$ for all $n_1, \ldots, n_k\in\N$. Its Dirichlet inverse is the M\"{o}bius function $\mu_k$ of $k$
variables given as
\begin{equation*}
\mu_k(n_1, \ldots, n_k) = \mu(n_1)\cdots\mu(n_k),
\end{equation*}
where $\mu$ is the classical M\"{o}bius function of one variable.

Let $g$ be an arithmetic function of one variable. Then the principal function $\Pr_k(g)$ associated with $g$ is the arithmetic
function of $k$ variables defined as
\begin{equation*}
\Pr_k(g)(n_1,\ldots, n_k)
= \begin{cases}
g(n),  & \text{ if $n_1=\cdots = n_k=n$,}\\
0,  & \text{ otherwise.}
\end{cases}
\end{equation*}
(See Vaidyanathaswamy \cite{Vai1931}.) Let $f$ be the arithmetic function of $k$ variables defined by
\begin{equation*}
f(n_1,\ldots, n_k)=g((n_1,\ldots, n_k)),
\end{equation*}
having the gcd on the right-hand side. Then
\begin{equation*}
f(n_1,\ldots, n_k)
=\sum_{d\mid (n_1,\ldots, n_k)}(\mu\ast g)(d) =\sum_{d_1\mid n_1,\ldots, d_k\mid n_k}
\Pr_k(\mu\ast g)(d_1,\ldots, d_k),
\end{equation*}
that is,
\begin{equation*}
f= \Pr_k(\mu\ast g) \ast_k \zeta_k,
\end{equation*}
which means that
\begin{equation}\label{eq:princ}
\Pr_k(\mu\ast g)= \mu_k\ast_k f.
\end{equation}

An arithmetic function $f$ of $k$ variables is said to be multiplicative if
$f(1,\ldots, 1)=1$  and
\begin{equation*}
f(m_1 n_1,\ldots, m_k n_k)=f(m_1,\ldots, m_k)f(n_1,\ldots, n_k)
\end{equation*}
for all positive integers $m_1,\ldots, m_k$ and $n_1,\ldots, n_k$
with $(m_1\cdots m_k, n_1\cdots n_k)=1$. For example, the gcd function $(n_1,\ldots,n_k)$ and the lcm function $[n_1,\ldots,n_k]$ are
multiplicative. If $f$ and $g$ are multiplicative functions of $k$ variables, then their Dirichlet convolution
$f\ast_k g$ is also multiplicative. See \cite{Tot2014,Vai1931}.

\subsection{Number of solutions of congruences} \label{Sect_Nr_Congr}

For a given polynomial $P\in \Z[x]$ let $N_P(n)$ denote the number of solutions $x$ (mod $n$) of the congruence
$P(x)\equiv 0$ (mod $n$) and let $\widehat{N}_P(n)$ be the number of solutions $x$ (mod $n$) such that $(x,n)=1$.
Furthermore, for a fixed integer $s\in \N$, let $\widehat{N}_P(n,s)$ be the number of solutions $x$ (mod $n$) such that
$(x,n,s)=1$.

The functions $N_P(n)$, $\widehat{N}_P(n)$ and $\widehat{N}_P(n,s)$ are multiplicative in $n$, which are direct
consequences of the Chinese remainder theorem.

It is easy to see that if $P(x)=a_0+a_1x+\cdots + a_kx^k$ and $(a_0,n)=1$, then $N_P(n)=\widehat{N}_P(n)=\widehat{N}_P(n,s)$.
This applies, in particular, to $P(x)=-1+x^{\ell}$. See the Introduction regarding the notation $C^{(\ell)}(n)$,
used by Li, Kim and Qiao \cite{LKQ2019}.

\subsection{Lemma}

We will need the next lemma.

\begin{lemma} \label{Lemma_cong} Let $d,r,s\in \N$, $x\in \Z$ such that $d\mid r$, $s\mid r$. Then
\begin{equation*}
\sum_{\substack{1\le a\le r \\ (a,s)=1 \\ a\equiv x \, \text{\rm (mod $d$)} }} 1 =
\begin{cases}  \displaystyle{ \frac{r}{d} \prod_{\substack{p\mid s\\ p\nmid d}} \left(1-\frac1{p}\right)}, & \text{ if $(d,s,x)=1$,} \\
0, & \text{ otherwise.}
\end{cases}
\end{equation*}
\end{lemma}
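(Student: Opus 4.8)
The plan is to count the integers $a$ in the range $1\le a\le r$ that satisfy simultaneously the congruence condition $a\equiv x\pmod d$ and the coprimality condition $(a,s)=1$, by writing the coprimality condition through Möbius inversion and then reducing to a clean counting problem on arithmetic progressions. First I would dispose of the trivial case: if $(d,s,x)>1$, pick a prime $p$ dividing $d$, $s$ and $x$; then $a\equiv x\pmod d$ forces $p\mid a$, while $p\mid s$ forces $(a,s)>1$, so the inner sum is empty and equals $0$. So from now on assume $(d,s,x)=1$.

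Next, since $d\mid r$, the integers in $[1,r]$ congruent to $x$ modulo $d$ are exactly $r/d$ in number, and they run through a complete residue system modulo $r/d$ when reduced suitably; more precisely, writing $a=x+d t$ with $t$ ranging over a complete residue system of length $r/d$, and using $s\mid r$, I would apply the standard Möbius expansion $\1_{(a,s)=1}=\sum_{e\mid (a,s)}\mu(e)=\sum_{e\mid s,\ e\mid a}\mu(e)$. Interchanging the order of summation gives
\begin{equation*}
\sum_{\substack{1\le a\le r\\ (a,s)=1\\ a\equiv x\ (\mathrm{mod}\ d)}} 1
= \sum_{e\mid s} \mu(e) \sum_{\substack{1\le a\le r\\ a\equiv x\ (\mathrm{mod}\ d)\\ a\equiv 0\ (\mathrm{mod}\ e)}} 1 .
\end{equation*}
The inner sum counts solutions of a pair of congruences modulo $d$ and modulo $e$. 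I would split $e=e_1 e_2$ with $e_1=(e,d^\infty)$ the part of $e$ built from primes dividing $d$ and $e_2$ coprime to $d$; the two congruences are compatible iff every prime $p\mid e_1$ already divides $x$ — but $(d,s,x)=1$ together with $p\mid d$, $p\mid s$ rules this out unless $e_1=1$. Hence only squarefree $e$ with $(e,d)=1$ contribute, and for such $e$ the combined modulus is $\lcm(d,e)=de$, which divides $r$ (as $d\mid r$, $e\mid s\mid r$, $(e,d)=1$), so the inner sum is exactly $r/(de)$.

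Putting this together yields
\begin{equation*}
\sum_{\substack{1\le a\le r\\ (a,s)=1\\ a\equiv x\ (\mathrm{mod}\ d)}} 1
= \sum_{\substack{e\mid s\\ (e,d)=1}} \mu(e)\,\frac{r}{de}
= \frac{r}{d}\prod_{\substack{p\mid s\\ p\nmid d}}\left(1-\frac1p\right),
\end{equation*}
the last equality being the Euler-product factorization of the multiplicative sum $\sum_{e\mid s,\ (e,d)=1}\mu(e)/e$. The main obstacle is the bookkeeping in the previous paragraph: correctly arguing which divisors $e\mid s$ give a consistent system of congruences and identifying the combined modulus as $de$, which is where the hypothesis $(d,s,x)=1$ is really used (it guarantees no prime can simultaneously sit in $d$, in the relevant part of $e\mid s$, and divide $x$). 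Once that compatibility analysis is pinned down, everything else is a routine interchange of sums and a standard multiplicative-function computation.
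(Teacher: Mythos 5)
Your proposal is correct and follows essentially the same route as the paper: dispose of the case $(d,s,x)>1$ by exhibiting a prime dividing $(a,s)$, expand the coprimality condition via $\sum_{e\mid(a,s)}\mu(e)$, observe that the hypothesis $(d,s,x)=1$ forces only divisors $e\mid s$ with $(e,d)=1$ to yield a solvable congruence system (the paper phrases this as solvability of $\delta j\equiv x \pmod d$, which is the same analysis), count $r/(de)$ solutions using $de\mid r$, and factor the resulting sum $\frac{r}{d}\sum_{e\mid s,\,(e,d)=1}\mu(e)/e$ into the Euler product. No substantive differences.
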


In the special case $r=s$ this is known in the literature, usually proved  by the
inclusion-exclusion principle. See, e.g., \cite[Th.\ 5.32]{Apo1976}. Also see \cite[Lemma]{HauWan1997} for a generalization in terms of regular
convolutions. Here we use a different approach, similar to the proof of \cite[Th.\ 9.1]{Sit1978} and to the proofs of our previous papers \cite{Tot2019,Tot}.

\begin{proof}[Proof of Lemma {\rm \ref{Lemma_cong}}] Let $A$ denote the given sum.
If $(d,s,x)\ne 1$, then the sum $A$ is empty and equal to zero. Indeed, if we assume $p\mid (d,s,x)$ for some prime $p$, then $p\mid a\equiv x$
(mod $d$). Hence $p\mid (a,s)=1$, a contradiction.

Assume now that $(d,s,x)=1$. By using the property of the M\"{o}bius function, the given sum can be written as
\begin{equation} \label{last_sum}
A = \sum_{\substack{1\le a \le r \\ a\equiv x \, \text{\rm (mod $d$)} }}  \sum_{\delta \mid (a,s)}
\mu(\delta) = \sum_{\delta \mid s} \mu(\delta) \sum_{\substack{j=1\\ \delta j\equiv x \, \text{\rm (mod $d$)}  }}^{r/\delta} 1.
\end{equation}

Let $\delta\mid s$ be fixed. The linear congruence $\delta j\equiv x$ (mod $d$) has solutions in $j$ if and only if $(\delta,d) \mid x$. Here
$(\delta,d)\mid \delta$ and $\delta \mid s$, hence $(\delta,d) \mid s$. Also, $(\delta,d)\mid d$. If $(\delta,d)\mid x$ holds, then $(\delta,d)\mid (d,s,x)=1$, therefore
$(\delta,d)=1$. We deduce that the above congruence has
\begin{equation*}
N= \frac{r}{d \delta}
\end{equation*}
solutions (mod $r/\delta$) and the last sum in \eqref{last_sum} is $N$.
This gives
\begin{equation*}
A= \frac{r}{d} \sum_{\substack{\delta \mid s\\ (\delta,d)=1}} \frac{\mu(\delta)}{\delta}= \frac{r}{d}\prod_{\substack{p\mid s\\ p\nmid d}}
\left(1-\frac1{p}\right).
\end{equation*}
\end{proof}

\section{Main results}

Assume that

(1) $k,\ell \ge 0$ are fixed integers, not both zero;

(2) $m_i,r_i,s_i,n_j,t_j\in \N$ are integers such that $m_i\mid r_i$, $s_i\mid r_i$,  $n_j\mid t_j$ ($1\le i\le k$, $1\le j\le \ell$);

(3) $f:\N^{k+\ell} \to \C$ is an arbitrary arithmetic function of $k+\ell$ variables;

(4) $P_i,Q_j\in \Z[x]$ are arbitrary polynomials ($1\le i\le k$, $1\le j\le \ell$).

Consider the sum
\begin{equation*}
S:= \sum_{\substack{1\le a_i\le r_i\\ (a_i,s_i)=1\\ 1\le i\le k}}
\sum_{\substack{1\le b_j\le t_j\\ 1\le j\le \ell}} f((P_1(a_1),m_1), \ldots,(P_k(a_k),m_k), (Q_1(b_1),n_1),\ldots,(Q_{\ell}(b_{\ell}),n_{\ell})),
\end{equation*}
where $(P_i(a_i),m_i)$ and $(Q_j(b_j),n_j)$ represent the gcd's of the corresponding values ($1\le i\le k$, $1\le j\le \ell$).

\begin{theorem} \label{Theorem_main}
Under the above assumptions (1)-(4) we have
\begin{equation*}
S =  r_1\cdots r_k t_1\cdots t_{\ell} \sum_{\substack{d_i\mid m_i \\1\le i\le k}}
\sum_{\substack{e_j \mid n_j \\ 1\le j\le \ell}} \frac{(\mu_{k+\ell}\ast_{k+\ell}f)(d_1,\ldots,d_k,e_1,\ldots,e_{\ell})}{d_1\cdots d_k
e_1\cdots e_{\ell}}
\end{equation*}
\begin{equation*}
\times \left(\prod_{1\le i\le k} \widehat{N}_{P_i}(d_i,s_i) \beta(s_i,d_i)\right)\left( \prod_{1\le j  \le \ell} N_{Q_j}(e_j)\right),
\end{equation*}
where $\widehat{N}_{P_i}(d_i,s_i)$ and $N_{Q_j}(e_j)$ \textup{($1\le i\le k$, $1\le j\le \ell$)} are defined in Section \ref{Sect_Nr_Congr}, and
\begin{equation*}
\beta(s_i,d_i)= \prod_{\substack{p\mid s_i\\ p\nmid d_i}} \left(1-\frac1{p}\right).
\end{equation*}
\end{theorem}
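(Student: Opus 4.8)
The plan is to expand the gcd arguments inside $f$ using the principal-function identity \eqref{eq:princ}, swap the order of summation, and then evaluate the resulting inner sums over $a_i$ and $b_j$ by Lemma \ref{Lemma_cong} (for the $a_i$'s) and by the definition of $N_{Q_j}$ (for the $b_j$'s). Concretely, first I would write, for each block of variables,
\begin{equation*}
f(u_1,\ldots,u_k,v_1,\ldots,v_\ell) = \sum_{\substack{d_i\mid u_i\ (1\le i\le k)\\ e_j\mid v_j\ (1\le j\le \ell)}} (\mu_{k+\ell}\ast_{k+\ell}f)(d_1,\ldots,d_k,e_1,\ldots,e_\ell),
\end{equation*}
and substitute $u_i=(P_i(a_i),m_i)$, $v_j=(Q_j(b_j),n_j)$. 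The condition $d_i\mid (P_i(a_i),m_i)$ is equivalent to $d_i\mid m_i$ together with $P_i(a_i)\equiv 0\pmod{d_i}$, and similarly $e_j\mid (Q_j(b_j),n_j)$ is equivalent to $e_j\mid n_j$ and $Q_j(b_j)\equiv 0\pmod{e_j}$.

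Next I would interchange the finite sums, bringing the sum over $(d_1,\ldots,d_k,e_1,\ldots,e_\ell)$ with $d_i\mid m_i$, $e_j\mid n_j$ to the outside. The sum $S$ then factors as a product of $k$ inner sums of the shape
\begin{equation*}
\sum_{\substack{1\le a_i\le r_i\\ (a_i,s_i)=1\\ P_i(a_i)\equiv 0\,(\mathrm{mod}\ d_i)}} 1
\end{equation*}
times $\ell$ inner sums of the shape $\sum_{\substack{1\le b_j\le t_j\\ Q_j(b_j)\equiv 0\,(\mathrm{mod}\ e_j)}} 1$, all weighted by $(\mu_{k+\ell}\ast_{k+\ell}f)(d_1,\ldots,e_\ell)$. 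For the $b_j$-sums I would group the $b_j\in\{1,\ldots,t_j\}$ according to their residue class mod $e_j$: since $e_j\mid t_j$, each residue class has exactly $t_j/e_j$ representatives, so the $j$-th sum equals $(t_j/e_j)\,N_{Q_j}(e_j)$, directly from the definition of $N_{Q_j}$ in Section \ref{Sect_Nr_Congr}. For the $a_i$-sums I would likewise group by residue class $x$ mod $d_i$; for each such class with $P_i(x)\equiv 0\pmod{d_i}$, Lemma \ref{Lemma_cong} (applied with $r=r_i$, $s=s_i$, $d=d_i$, noting $d_i\mid r_i$ and $s_i\mid r_i$) says the count of $a_i$ in that class coprime to $s_i$ is $(r_i/d_i)\,\beta(s_i,d_i)$ when $(d_i,s_i,x)=1$ and $0$ otherwise; summing over the admissible residues $x$ mod $d_i$ gives exactly $(r_i/d_i)\,\beta(s_i,d_i)\,\widehat{N}_{P_i}(d_i,s_i)$, by the definition of $\widehat{N}_{P_i}(d_i,s_i)$ as the number of roots $x$ of $P_i$ mod $d_i$ with $(x,d_i,s_i)=1$.

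Collecting the factors, each term acquires the weight $\prod_i (r_i/d_i)\,\beta(s_i,d_i)\,\widehat{N}_{P_i}(d_i,s_i)\cdot\prod_j (t_j/e_j)\,N_{Q_j}(e_j)$; pulling $r_1\cdots r_k t_1\cdots t_\ell$ out front and leaving $1/(d_1\cdots d_k e_1\cdots e_\ell)$ inside yields precisely the claimed formula. The main thing to get right is the bookkeeping in the $a_i$-sums: one must correctly reduce "$a_i$ ranges over $\{1,\ldots,r_i\}$ with $P_i(a_i)\equiv 0\pmod{d_i}$ and $(a_i,s_i)=1$" to a sum over residues $x$ mod $d_i$ of the Lemma's count, and check that the coprimality condition $(d_i,s_i,x)=1$ built into the Lemma matches the condition defining $\widehat{N}_{P_i}(d_i,s_i)$ — the reduction is legitimate because $d_i\mid r_i$ so the fibers of the reduction map $\{1,\ldots,r_i\}\to\Z/d_i\Z$ are uniform, and the Lemma already discards the classes that contribute nothing. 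Everything else is a routine rearrangement of finitely many sums.
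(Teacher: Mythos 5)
Your proposal is correct and follows essentially the same route as the paper: expand $f$ via Möbius inversion in $k+\ell$ variables, interchange summations so the inner sums factor, evaluate the $b_j$-sums as $(t_j/e_j)N_{Q_j}(e_j)$, and evaluate the $a_i$-sums by summing Lemma \ref{Lemma_cong} over the $\widehat{N}_{P_i}(d_i,s_i)$ admissible residues mod $d_i$. Nothing essential is missing.
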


\begin{cor}
If $\ell=0$, then Theorem \ref{Theorem_main} gives the pure Menon-type identity
\begin{equation}\label{Cor_pure_Menon}
S =  r_1\cdots r_k
\sum_{\substack{d_i\mid m_i \\1\le i\le k}}
 \frac{(\mu_{k}\ast_{k}f)(d_1,\ldots,d_k)}{d_1\cdots d_k}
\left(\prod_{1\le i\le k}  \widehat{N}_{P_i}(d_i,s_i) \beta(s_i,d_i)\right),
\end{equation}
and if $k=0$, it gives the pure gcd-sum identity
\begin{equation*}
S = t_1\cdots t_{\ell}
\sum_{\substack{e_j \mid n_j \\ 1\le j\le \ell}} \frac{(\mu_{\ell}\ast_{\ell}f)(e_1,\ldots,e_{\ell})}{e_1\cdots e_{\ell}}
\left(\prod_{1\le j  \le \ell} N_{Q_j}(e_j)\right).
\end{equation*}
\end{cor}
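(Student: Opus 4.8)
The plan is to expand $f$ by M\"obius inversion over the $(k+\ell)$-variable Dirichlet convolution and then reduce the resulting inner sums to the counting functions of Section~\ref{Sect_Nr_Congr} via Lemma~\ref{Lemma_cong}. Since $f=(\mu_{k+\ell}\ast_{k+\ell}f)\ast_{k+\ell}\zeta_{k+\ell}$, for every tuple of positive integers
\begin{equation*}
f(u_1,\ldots,u_{k+\ell})=\sum_{d_1\mid u_1}\cdots\sum_{d_{k+\ell}\mid u_{k+\ell}}(\mu_{k+\ell}\ast_{k+\ell}f)(d_1,\ldots,d_{k+\ell}).
\end{equation*}
I would substitute $u_i=(P_i(a_i),m_i)$ for $1\le i\le k$ and $u_{k+j}=(Q_j(b_j),n_j)$ for $1\le j\le\ell$, observe that $d\mid(P(a),m)$ is equivalent to the pair of conditions $d\mid m$ and $P(a)\equiv 0\pmod d$, and interchange the (finitely many, hence harmlessly reordered) sums to get
\begin{equation*}
S=\sum_{\substack{d_i\mid m_i\\ 1\le i\le k}}\sum_{\substack{e_j\mid n_j\\ 1\le j\le\ell}}(\mu_{k+\ell}\ast_{k+\ell}f)(d_1,\ldots,d_k,e_1,\ldots,e_{\ell})\,\prod_{i=1}^{k}A_i\,\prod_{j=1}^{\ell}B_j,
\end{equation*}
where $A_i=\#\{a_i:1\le a_i\le r_i,\ (a_i,s_i)=1,\ P_i(a_i)\equiv 0\ (\mathrm{mod}\ d_i)\}$ and $B_j=\#\{b_j:1\le b_j\le t_j,\ Q_j(b_j)\equiv 0\ (\mathrm{mod}\ e_j)\}$.

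Next I would evaluate the inner sums by sorting the variables into residue classes. For $B_j$: since $e_j\mid n_j\mid t_j$ and $Q_j(b_j)\bmod e_j$ depends only on $b_j\bmod e_j$, each admissible residue class modulo $e_j$ contributes exactly $t_j/e_j$ values, so $B_j=(t_j/e_j)\,N_{Q_j}(e_j)$. For $A_i$: grouping by the residue $x$ of $a_i$ modulo $d_i$, and using $d_i\mid m_i\mid r_i$ together with $s_i\mid r_i$, Lemma~\ref{Lemma_cong} (with $d=d_i$, $r=r_i$, $s=s_i$) says that the class $x$ contributes $(r_i/d_i)\,\beta(s_i,d_i)$ if $(d_i,s_i,x)=1$ and $0$ otherwise. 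Hence $A_i=(r_i/d_i)\,\beta(s_i,d_i)\cdot\#\{x\ (\mathrm{mod}\ d_i):P_i(x)\equiv 0\ (\mathrm{mod}\ d_i),\ (d_i,s_i,x)=1\}=(r_i/d_i)\,\widehat N_{P_i}(d_i,s_i)\,\beta(s_i,d_i)$ by the definition of $\widehat N_{P_i}(\cdot,\cdot)$. Plugging these into the displayed formula and pulling the constant $r_1\cdots r_k t_1\cdots t_\ell$ and the denominators $d_1\cdots d_k e_1\cdots e_\ell$ out of the double sum gives precisely the asserted identity; the degenerate cases $k=0$ or $\ell=0$ are covered by reading the corresponding empty products as $1$.

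The one step I would write out most carefully is the identification of $A_i$ with $\widehat N_{P_i}(d_i,s_i)$: it is essential to notice that summing Lemma~\ref{Lemma_cong} over all residues $x$ modulo $d_i$ with $P_i(x)\equiv 0\pmod{d_i}$ automatically kills those with $(d_i,s_i,x)\ne 1$, so the surviving count is $\widehat N_{P_i}(d_i,s_i)$ and \emph{not} $N_{P_i}(d_i)$ in general. Everything else is a bookkeeping interchange of finite sums, requiring no convergence considerations. In the special situation where each $P_i$ has constant term coprime to $m_i$ (e.g.\ $P_i(x)=x^{\ell}-1$), the remark in Section~\ref{Sect_Nr_Congr} gives $\widehat N_{P_i}(d_i,s_i)=N_{P_i}(d_i)$, which is how the cleaner identities \eqref{Menon_id_Siv} and \eqref{id_Debrecen_paper} will later be recovered.
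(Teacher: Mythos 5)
Your argument is correct and follows essentially the same route as the paper: the corollary is an immediate specialization of Theorem \ref{Theorem_main} (read the empty products as $1$), and your derivation of that theorem — M\"obius inversion via \eqref{key_id}, interchange of finite sums, the count $B_j=(t_j/e_j)N_{Q_j}(e_j)$, and the evaluation of $A_i$ through Lemma \ref{Lemma_cong} yielding $\widehat N_{P_i}(d_i,s_i)$ rather than $N_{P_i}(d_i)$ — matches the paper's proof step for step.
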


If $k=1$, $f(n)=n$ ($n\in \N$) and $P(x)=x-1$, then identity \eqref{Cor_pure_Menon} reduces to
\begin{equation*}
\sum_{\substack{a=1 \\ (a,s)=1}}^r (a-1,m)  = r \sum_{d\mid m} \frac{\varphi(d)}{d} \prod_{\substack{p\mid s\\ p\nmid d}}
\left(1-\frac1{p}\right)
\end{equation*}
\begin{equation*}
= \frac{r\, \varphi(s)\tau(m)}{s} \prod_{p^\nu\mid \mid m_1}
\left(1-\frac{\nu}{(\nu+1)p} \right),
\end{equation*}
where $m\mid r$, $s\mid r$ and $m_1=\max \{d\in \N: d\mid m, (d,s)=1\}$, which is identity \eqref{Menon_id_Siv}
(with the corresponding change of notations).

\begin{remark}{\rm Haukkanen and Wang \cite{HauWan1997} considered
systems of polynomials in several variables and a different constraint,
namely $(a_1,\ldots,a_k,n)=1$ in the first sum defining $S$. }
\end{remark}

\begin{proof}[Proof of Theorem {\rm \ref{Theorem_main}}]
It is an immediate consequence of the definition of the function $\mu_k$ that
\begin{equation} \label{key_id}
f(n_1,\ldots,n_k) = \sum_{d_1\mid n_1,\ldots, d_k\mid n_k} (\mu_k\ast_k f)(d_1,\ldots,d_k).
\end{equation}

By using \eqref{key_id} we have
\begin{equation*}
S = \sum_{\substack{1\le a_i\le r_i\\ (a_i,s_i)=1\\ 1\le i\le k}}
\sum_{\substack{1\le b_j\le t_j\\ 1\le j\le \ell}} \sum_{\substack{d_i\mid (P_i(a_i),m_i)\\1\le i\le k}}
\, \sum_{\substack{e_j \mid (Q_j(b_j),n_j)\\ 1\le j\le \ell}} (\mu_{k+\ell}\ast_{k+\ell} f)(d_1,\ldots,d_k,e_1,\ldots,e_{\ell})
\end{equation*}
\begin{equation*}
= \sum_{\substack{d_i\mid m_i \\1\le i\le k}} \sum_{\substack{e_j \mid n_j \\ 1\le j\le \ell}} (\mu_{k+\ell}\ast_{k+\ell}f)(d_1,\ldots,d_k,e_1,\ldots,e_{\ell})
\end{equation*}
\begin{equation*}
\times \Big( \prod_{1\le i\le k}  \sum_{\substack{1\le a_i\le r_i\\ (a_i,s_i)=1\\ P_i(a_i)\equiv 0 \text{ (mod $d_i$)} }} 1 \Big)
\Big( \prod_{1\le j \le \ell} \sum_{\substack{1\le b_j\le t_j\\ Q_j(b_j)\equiv 0 \text{ (mod $e_j$) } }} 1 \Big).
\end{equation*}

Now we use Lemma \ref{Lemma_cong} to evaluate the sum
\begin{equation*}
B_i:= \sum_{\substack{1\le a_i\le r_i\\ (a_i,s_i)=1\\ P_i(a_i)\equiv 0 \text{ (mod $d_i$)} }} 1.
\end{equation*}

For any $x$ such that $(x,d_i,s_i)=1$ we have
\begin{equation*}
\sum_{\substack{1\le a_i\le r_i\\ (a_i,s_i)=1\\ a_i\equiv x \text{ (mod $d_i$)} }} 1 = \frac{r_i}{d_i} \beta(s_i,d_i),
\end{equation*}
and there are $\widehat{N}_{P_i}(d_i,s_i)$ such values of $x$ (mod $d_i$). Hence,
\begin{equation*}
B_i= \frac{r_i}{d_i} \beta(s_i,d_i) \widehat{N}_{P_i}(d_i,s_i).
\end{equation*}

We also have
\begin{equation*}
\sum_{\substack{1\le b_j\le t_j\\ Q_j(b_j)\equiv 0 \text{ (mod $e_j$)} }} 1 = \frac{t_j}{e_j} N_{Q_j}(e_j).
\end{equation*}

Notice that here $r_i/d_i$ and $t_j/e_j$ are integers for any $i,j$.

Putting together this gives
\begin{equation*}
S = \sum_{\substack{d_i\mid m_i \\1\le i\le k}} \sum_{\substack{e_j \mid n_j \\ 1\le j\le \ell}} (\mu_{k+\ell}\ast_{k+\ell}f)(d_1,\ldots,d_k,e_1,\ldots,e_{\ell})
\end{equation*}
\begin{equation*}
\times \Big( \prod_{1\le i\le k}  \widehat{N}_{P_i}(d_i,s_i) \frac{r_i}{d_i} \beta(s_i,d_i) \Big)
\Big(\prod_{1\le j \le \ell} \frac{t_j}{e_j} N_{Q_j}(e_j)\Big)
\end{equation*}
\begin{equation*}
= r_1\cdots r_k t_1\cdots t_{\ell} \sum_{\substack{d_i\mid m_i \\1\le i\le k}}
\sum_{\substack{e_j \mid n_j \\ 1\le j\le \ell}} \frac{(\mu_{k+\ell}\ast_{k+\ell}f)(d_1,\ldots,d_k,e_1,\ldots,e_{\ell})}{d_1\cdots d_k
e_1 \cdots e_{\ell}}
\end{equation*}
\begin{equation*}
\times \Big(\prod_{1\le i\le k}  \widehat{N}_{P_i}(d_i,s_i) \beta(s_i,d_i) \Big) \Big(\prod_{1\le j  \le \ell} N_{Q_j}(e_j) \Big).
\end{equation*}
\end{proof}

\begin{cor} Assume that $m_i\mid s_i$ and $s_i\mid r_i$ for any $i$ with $1\le i\le k$ Then
\begin{equation*}
S =  r_1\frac{\varphi(s_1)}{s_1}\cdots r_k\frac{\varphi(s_k)}{s_k} t_1\cdots t_{\ell} \sum_{\substack{d_i\mid m_i \\1\le i\le k}}
\sum_{\substack{e_j \mid n_j \\ 1\le j\le \ell}} \frac{(\mu_{k+\ell}\ast_{k+\ell}f)(d_1,\ldots,d_k,e_1,\ldots,e_{\ell})}{\varphi(d_1)
\cdots \varphi(d_k) e_1\cdots e_{\ell}}
\end{equation*}
\begin{equation*}
\times \Big(\prod_{1\le i\le k} \widehat{N}_{P_i}(d_i) \Big) \Big( \prod_{1\le j  \le \ell} N_{Q_j}(e_j) \Big).
\end{equation*}
\end{cor}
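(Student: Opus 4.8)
The plan is to obtain this corollary as a direct specialization of Theorem \ref{Theorem_main}: under the extra hypothesis $m_i\mid s_i$ and $s_i\mid r_i$ the factor $\widehat{N}_{P_i}(d_i,s_i)\,\beta(s_i,d_i)/d_i$ occurring in the general formula simplifies, and the claimed identity falls out after pulling constants out of the double sum. First I would record that, in the sum of Theorem \ref{Theorem_main}, each summation index $d_i$ runs over divisors of $m_i$, so $d_i\mid m_i\mid s_i$ for every term; in particular $\gcd(d_i,s_i)=d_i$, whence $(x,d_i,s_i)=(x,d_i)$ for all $x$. This shows that $\widehat{N}_{P_i}(d_i,s_i)=\widehat{N}_{P_i}(d_i)$, the count appearing in the statement.

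Next I would simplify $\beta(s_i,d_i)$. Since $d_i\mid s_i$, every prime dividing $d_i$ also divides $s_i$, so the product defining $\beta$ telescopes:
\[
\beta(s_i,d_i)=\prod_{\substack{p\mid s_i\\ p\nmid d_i}}\Bigl(1-\tfrac1p\Bigr)
=\frac{\prod_{p\mid s_i}(1-1/p)}{\prod_{p\mid d_i}(1-1/p)}
=\frac{\varphi(s_i)/s_i}{\varphi(d_i)/d_i}
=\frac{d_i\,\varphi(s_i)}{s_i\,\varphi(d_i)}.
\]
Combining the two observations,
\[
\frac{r_i}{d_i}\,\widehat{N}_{P_i}(d_i,s_i)\,\beta(s_i,d_i)
= r_i\,\frac{\varphi(s_i)}{s_i}\cdot\frac{\widehat{N}_{P_i}(d_i)}{\varphi(d_i)} .
\]

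Finally I would substitute these expressions into the formula of Theorem \ref{Theorem_main}, pull the factors $r_i\,\varphi(s_i)/s_i$ (independent of the summation variables) out in front, and group the surviving $1/\varphi(d_i)$ terms into the denominator, which reproduces precisely the asserted identity. There is no real obstacle here: the only thing to be careful about is the correct use of the chain of divisibilities $d_i\mid m_i\mid s_i\mid r_i$, which simultaneously guarantees that $r_i/d_i\in\N$ (as already noted in the proof of Theorem \ref{Theorem_main}), that the radical of $d_i$ divides that of $s_i$ so the quotient defining $\beta(s_i,d_i)$ telescopes as above, and that the coprimality conditions defining $\widehat{N}_{P_i}(d_i,s_i)$ and $\widehat{N}_{P_i}(d_i)$ coincide. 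The hypothesis $s_i\mid r_i$ is exactly condition (2) of Section 3 together with $m_i\mid r_i$, so no new admissibility needs to be checked.
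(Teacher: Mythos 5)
Your proposal is correct and follows exactly the paper's own route: specialize Theorem \ref{Theorem_main}, use $d_i\mid m_i\mid s_i$ to get $\widehat{N}_{P_i}(d_i,s_i)=\widehat{N}_{P_i}(d_i)$ and $\beta(s_i,d_i)=\frac{\varphi(s_i)/s_i}{\varphi(d_i)/d_i}$, then absorb the constants. The only difference is that you spell out the telescoping of $\beta$ and the divisibility chain in more detail than the paper does.
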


\begin{proof} Apply Theorem \ref{Theorem_main}. Since $d_i\mid m_i$, we have $d_i\mid s_i$. Hence
$\widehat{N}_{P_i}(d_i,s_i)= \widehat{N}_{P_i}(d_i)$ and
\begin{equation*}
\beta(s_i,d_i)= \prod_{\substack{p\mid s_i\\ p\nmid d_i}} \left(1-\frac1{p}\right) = \frac{\varphi(s_i)/s_i}{\varphi(d_i)/d_i}.
\end{equation*}
\end{proof}

\begin{cor} Assume that $m_i=r_i=s_i$ and $n_j=t_j$ for any $i,j$ ($1\le i\le k$, $1\le j\le \ell$). Then
\begin{equation} \label{S_special}
S =  \varphi(m_1)\cdots \varphi(m_k) n_1\cdots n_{\ell}
\sum_{\substack{d_i\mid m_i \\1\le i\le k}} \sum_{\substack{e_j \mid n_j \\ 1\le j\le \ell}}
\frac{(\mu_{k+\ell}\ast_{k+\ell}f)(d_1,\ldots,d_k,e_1,\ldots,e_{\ell})}{\varphi(d_1)\cdots \varphi(d_k) e_1\cdots e_{\ell}}
\end{equation}
\begin{equation*}
\times \Big( \prod_{1\le i\le k}  \widehat{N}_{P_i}(d_i) \Big)  \Big( \prod_{1\le j  \le \ell} N_{Q_j}(e_j) \Big).
\end{equation*}
\end{cor}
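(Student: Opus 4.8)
The plan is to derive this as a direct specialization of the preceding corollary. First I would observe that the hypotheses $m_i=r_i=s_i$ trivially imply $m_i\mid s_i$ and $s_i\mid r_i$ for each $i$ (and $n_j\mid t_j$ holds since $n_j=t_j$), so that corollary applies verbatim with the function $f$, the polynomials $P_i,Q_j$, and the remaining data $m_i,n_j$ as given.

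Next I would carry out the two routine simplifications. On the one hand, since $s_i=m_i$ the prefactor $r_i\varphi(s_i)/s_i$ collapses: with $r_i=m_i$ it equals $m_i\cdot\varphi(m_i)/m_i=\varphi(m_i)$, producing the claimed factor $\varphi(m_1)\cdots\varphi(m_k)$. The denominators $\varphi(d_1)\cdots\varphi(d_k)$ are already in the correct form, and the congruence counts $\widehat{N}_{P_i}(d_i)$ (which appear in the preceding corollary once one notes $d_i\mid m_i=s_i$) are unchanged. On the other hand, the assumption $n_j=t_j$ turns $t_1\cdots t_{\ell}$ into $n_1\cdots n_{\ell}$, while the inner sum over $e_j\mid n_j$ and the factors $N_{Q_j}(e_j)$ and $e_1\cdots e_{\ell}$ are untouched. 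Assembling these substitutions gives exactly \eqref{S_special}.

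If one prefers to avoid citing the intermediate corollary, the same result follows directly from Theorem \ref{Theorem_main}: with $s_i=m_i$ one has $\widehat{N}_{P_i}(d_i,s_i)=\widehat{N}_{P_i}(d_i)$ because $d_i\mid m_i$, and
\begin{equation*}
\frac{r_i}{d_i}\,\beta(s_i,d_i)=\frac{m_i}{d_i}\prod_{\substack{p\mid m_i\\ p\nmid d_i}}\left(1-\frac1p\right)=\frac{\varphi(m_i)}{\varphi(d_i)},
\end{equation*}
the last equality holding since every prime dividing $d_i$ also divides $m_i$. There is no genuine obstacle here beyond this elementary identity for $\varphi$ and the bookkeeping of the substitutions, so I would keep the argument to a couple of lines and simply refer back to the previous corollary.
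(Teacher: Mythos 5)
Your proposal is correct and matches the paper's (implicit) argument: the paper states this corollary without proof precisely because it is the immediate specialization $m_i=r_i=s_i$, $n_j=t_j$ of the preceding corollary, which is what you carry out. Your substitutions $r_i\varphi(s_i)/s_i=\varphi(m_i)$ and $\widehat{N}_{P_i}(d_i,s_i)=\widehat{N}_{P_i}(d_i)$, as well as the alternative direct check from Theorem \ref{Theorem_main}, are all accurate.
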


\begin{theorem} Assume conditions (1)-(4). Furthermore, let $r_i=[m_i,s_i]$, $t_j=n_j$ \textup{($1\le i\le k$, $1\le j\le \ell$)}
and let $f$ be a multiplicative function of $k+\ell$ variables.  Then the sum
$$
S=S(m_1,\ldots,m_k,s_1,\ldots,s_k,n_1,\ldots,n_{\ell})
$$
represents a multiplicative function of $2k+\ell$ variables.
\end{theorem}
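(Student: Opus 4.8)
The plan is to establish multiplicativity directly from the definition of $S$, by means of the Chinese remainder theorem; the hypothesis $r_i=[m_i,s_i]$ is exactly what makes this work. Recall that $g$ being a multiplicative function of $2k+\ell$ variables means $g(1,\ldots,1)=1$ together with $g(\mathbf{u}\mathbf{v})=g(\mathbf{u})g(\mathbf{v})$ whenever the product of the coordinates of $\mathbf{u}$ is coprime to the product of the coordinates of $\mathbf{v}$. The normalization is immediate: if $m_1=\cdots=m_k=s_1=\cdots=s_k=1$ and $n_1=\cdots=n_\ell=1$, then $r_i=[1,1]=1$ and $t_j=n_j=1$, so the only admissible choice is $a_i=b_j=1$ and $S=f(1,\ldots,1)=1$, using that $f$ is multiplicative. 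It therefore remains to prove the product relation.

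Fix a coprime splitting $m_i=m_i'm_i''$, $s_i=s_i's_i''$, $n_j=n_j'n_j''$ with $\gcd\!\bigl(\prod_i m_i's_i'\prod_j n_j',\prod_i m_i''s_i''\prod_j n_j''\bigr)=1$. Then $[m_i,s_i]=[m_i',s_i']\,[m_i'',s_i'']$, and the two factors are coprime since $[m_i',s_i']\mid m_i's_i'$, $[m_i'',s_i'']\mid m_i''s_i''$ and $\gcd(m_i's_i',m_i''s_i'')=1$; likewise $n_j=n_j'n_j''$ with $n_j',n_j''$ coprime. Hence the Chinese remainder theorem provides bijections $a_i\bmod r_i\leftrightarrow(a_i'\bmod r_i',\,a_i''\bmod r_i'')$ and $b_j\bmod t_j\leftrightarrow(b_j'\bmod t_j',\,b_j''\bmod t_j'')$, where $r_i'=[m_i',s_i']$, $r_i''=[m_i'',s_i'']$, $t_j'=n_j'$, $t_j''=n_j''$. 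Because $m_i',s_i'\mid r_i'$, the residue of $a_i$ modulo $m_i'$ and modulo $s_i'$ is governed by $a_i'$, and similarly for the double primes; combined with $P_i\in\Z[x]$ (so congruences are preserved) this yields, for every $i$,
\[
\gcd\!\bigl(a_i,s_i\bigr)=\gcd\!\bigl(a_i',s_i'\bigr)\,\gcd\!\bigl(a_i'',s_i''\bigr),\qquad
\gcd\!\bigl(P_i(a_i),m_i\bigr)=\gcd\!\bigl(P_i(a_i'),m_i'\bigr)\,\gcd\!\bigl(P_i(a_i''),m_i''\bigr),
\]
and the same reasoning with $Q_j$ gives $\gcd(Q_j(b_j),n_j)=\gcd(Q_j(b_j'),n_j')\,\gcd(Q_j(b_j''),n_j'')$. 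Thus the condition $(a_i,s_i)=1$ splits into $(a_i',s_i')=(a_i'',s_i'')=1$, while the argument vector of $f$ splits coordinatewise into a ``primed'' vector and a ``double-primed'' vector whose coordinate products divide $\prod_i m_i'\prod_j n_j'$ and $\prod_i m_i''\prod_j n_j''$, respectively, hence are coprime. Multiplicativity of $f$ then factors the summand, and the CRT bijections factor the whole sum over $a_i,b_j$, giving $S=S'S''$; together with the normalization above this proves that $S$ is a multiplicative function of the $2k+\ell$ variables.

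The only point demanding real care is the bookkeeping just sketched: verifying that every gcd occurring as an argument of $f$ factors over the two coprime blocks. This rests on two facts — that $m_i$ and $s_i$ both divide $r_i=[m_i,s_i]$, so that the CRT components $a_i',a_i''$ control the residues of $a_i$ modulo all relevant moduli, and that $P_i,Q_j\in\Z[x]$, so congruences pass through. A common multiple of $m_i$ and $s_i$ that failed to be a multiplicative function of $(m_i,s_i)$ would break the factorization of the range of $a_i$, which is precisely why $r_i=[m_i,s_i]$ is imposed. Alternatively one may argue from the closed formula of Theorem~\ref{Theorem_main}: there the obstacle becomes checking that $(d,s)\mapsto\widehat{N}_P(d,s)$ and $(s,d)\mapsto\beta(s,d)$ are jointly multiplicative — for the former note that $\gcd(x,d,s)=\gcd\!\bigl(x,d,\gcd(d,s)\bigr)$ and use multiplicativity in $d$, for the latter use the prime-by-prime description — after which $S$ appears as a product of the multiplicative function $\prod_i[m_i,s_i]\prod_j n_j$ with a divisor sum $\sum_{d_i\mid m_i,\,e_j\mid n_j}$ of a multiplicative summand, and is therefore multiplicative.
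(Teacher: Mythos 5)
Your argument is correct, but your primary route is genuinely different from the paper's. You work directly with the defining sum: for a coprime factorization of the data, the hypothesis $r_i=[m_i,s_i]$ makes the range of $a_i$ split under the Chinese remainder theorem (since $[m_i'm_i'',s_i's_i'']=[m_i',s_i'][m_i'',s_i'']$ with coprime factors), the condition $(a_i,s_i)=1$ and each gcd occurring as an argument of $f$ factor across the two blocks because $m_i,s_i\mid r_i$ and $P_i,Q_j\in\Z[x]$, and multiplicativity of $f$ then factors the summand, giving $S=S'S''$; together with $S(1,\ldots,1)=1$ this is a complete proof. The paper instead argues from the closed formula of Theorem~\ref{Theorem_main}: it verifies that $\beta(s_i,d_i)$ is multiplicative as a function of the pair $(s_i,d_i)$ by exhibiting it as a convolution $\sum_{\delta\mid s_i}\mu(\delta)\delta^{-1}h(\delta,d_i)$ with $h$ multiplicative, and then reads off multiplicativity of $S$ as a divisor sum (over $d_i\mid m_i$, $e_j\mid n_j$) of a product of multiplicative functions, the prefactor $\prod_i[m_i,s_i]\prod_j n_j$ being multiplicative as well. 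Your closing sketch --- checking joint multiplicativity of $(d,s)\mapsto\widehat{N}_{P}(d,s)$ and $(s,d)\mapsto\beta(s,d)$ and concluding from the formula --- is essentially the paper's proof, and in fact is slightly more explicit than the paper on the point that $\widehat{N}_{P}(d,s)$ must be multiplicative in the pair, not merely in $d$. The direct CRT route is more self-contained and makes transparent why $r_i=[m_i,s_i]$ is the right hypothesis; the paper's route is shorter once Theorem~\ref{Theorem_main} is available and reuses the general machinery of multiplicative functions of several variables.
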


\begin{proof} Note that
\begin{equation} \label{g}
\beta(s_i,d_i)= \sum_{\substack{\delta \mid s_i\\ (\delta,d_i)=1}} \frac{\mu(\delta)}{\delta}=
\sum_{\delta \mid s_i} \frac{\mu(\delta)}{\delta} h(\delta,d_i),
\end{equation}
where the function of two variables
\begin{equation*}
h(\delta,d_i)= \sum_{\substack{c\mid \delta \\ c\mid d_i}} \mu(c)
\end{equation*}
is multiplicative, being the convolution of multiplicative functions. Therefore, $\beta(s_i,d_i)$, given by the
convolution \eqref{g} is also multiplicative.

We conclude that $S$, given in Theorem \ref{Theorem_main} as a convolution of $2k+\ell$ variables of multiplicative functions,
is multiplicative, as well.
\end{proof}

\begin{cor} \label{Cor_multipl_k_ell} Assume that $m_i=r_i=s_i$ and $n_j=t_j$ for any $i,j$ and $f$ is multiplicative, viewed as a function of $k+\ell$ variables. Then
$S$ given by \eqref{S_special} is also multiplicative in $m_1,\ldots,m_k,n_1,\ldots,n_{\ell}$, as a function of $k+\ell$ variables.
\end{cor}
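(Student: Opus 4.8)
The plan is to derive this as a specialization of the theorem just established, combined with the elementary observation that restricting a multiplicative function of several variables to a ``diagonal'' of its arguments again yields a multiplicative function. First, I would note that the hypotheses $m_i=r_i=s_i$ ($1\le i\le k$) and $n_j=t_j$ ($1\le j\le\ell$) are a special case of the hypotheses of the preceding theorem: since $s_i=m_i$, we have $[m_i,s_i]=m_i=r_i$, so that theorem applies and shows that
\[
S=S(m_1,\ldots,m_k,s_1,\ldots,s_k,n_1,\ldots,n_{\ell})
\]
is a multiplicative function of $2k+\ell$ variables, and that the function appearing in \eqref{S_special} is obtained from it by the substitution $s_i=m_i$ for all $i$, that is, by precomposition with the map
\[
(m_1,\ldots,m_k,n_1,\ldots,n_{\ell})\longmapsto(m_1,\ldots,m_k,m_1,\ldots,m_k,n_1,\ldots,n_{\ell}).
\]

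Second, I would record the general fact that if $F:\N^p\to\C$ is multiplicative and $\sigma:\{1,\ldots,p\}\to\{1,\ldots,q\}$ is any map, then $G(y_1,\ldots,y_q):=F(y_{\sigma(1)},\ldots,y_{\sigma(p)})$ is multiplicative as a function of $q$ variables. The verification is routine: $G(1,\ldots,1)=1$, and whenever $(a_1\cdots a_q,\,b_1\cdots b_q)=1$ one also has $(a_{\sigma(1)}\cdots a_{\sigma(p)},\,b_{\sigma(1)}\cdots b_{\sigma(p)})=1$, since every prime dividing the first product divides $a_1\cdots a_q$ and every prime dividing the second divides $b_1\cdots b_q$; multiplicativity of $F$ then yields $G(a_1b_1,\ldots,a_qb_q)=G(a_1,\ldots,a_q)G(b_1,\ldots,b_q)$. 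Applying this with $p=2k+\ell$, $q=k+\ell$ and $\sigma$ the map displayed above gives the assertion.

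An alternative route, which avoids invoking the $2k+\ell$-variable theorem, reads off the multiplicativity directly from \eqref{S_special}: the prefactor $\varphi(m_1)\cdots\varphi(m_k)\,n_1\cdots n_{\ell}$ is multiplicative in $(m_1,\ldots,m_k,n_1,\ldots,n_{\ell})$, and the remaining double sum is the value at $(m_1,\ldots,m_k,n_1,\ldots,n_{\ell})$ of the Dirichlet convolution $\zeta_{k+\ell}\ast_{k+\ell}g$, where
\[
g(d_1,\ldots,d_k,e_1,\ldots,e_{\ell})=\frac{(\mu_{k+\ell}\ast_{k+\ell}f)(d_1,\ldots,d_k,e_1,\ldots,e_{\ell})}{\varphi(d_1)\cdots\varphi(d_k)\,e_1\cdots e_{\ell}}\prod_{1\le i\le k}\widehat{N}_{P_i}(d_i)\prod_{1\le j\le\ell}N_{Q_j}(e_j).
\]
Here $\mu_{k+\ell}\ast_{k+\ell}f$ is multiplicative because $f$ is, the factor $1/(\varphi(d_1)\cdots\varphi(d_k)e_1\cdots e_{\ell})$ is a product of functions each multiplicative in a single variable, and $\widehat{N}_{P_i}$, $N_{Q_j}$ are multiplicative by Section \ref{Sect_Nr_Congr}; hence $g$ is multiplicative as a function of $k+\ell$ variables, and therefore so are $\zeta_{k+\ell}\ast_{k+\ell}g$ and, finally, $S$.

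I do not expect a genuine obstacle here. The only point requiring a moment's care is the preservation of the coprimality hypothesis under the coordinate identification $s_i=m_i$ (equivalently, in the second approach, correctly recognizing the product in \eqref{S_special} as a convolution of multiplicative functions of $k+\ell$ variables rather than of $2k+\ell$ variables); both are immediate once stated explicitly.
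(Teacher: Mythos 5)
Your proposal is correct and follows essentially the same route as the paper, which states this corollary without further argument as an immediate specialization ($s_i=m_i$, so $r_i=[m_i,s_i]=m_i$) of the preceding multiplicativity theorem; you merely make explicit the small lemma the paper leaves tacit, namely that identifying variables (precomposing with a diagonal map) preserves multiplicativity of a function of several variables. Your alternative second route, reading multiplicativity directly off \eqref{S_special} as $\zeta_{k+\ell}\ast_{k+\ell}g$ with $g$ multiplicative, is also sound and has the minor advantage of being self-contained, bypassing the $2k+\ell$-variable theorem entirely.
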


\begin{remark} {\rm Note that in his original paper Menon \cite[Lemma]{Men1965} proved that if $f$ is a multiplicative arithmetic function of $r$
variables and $P_i\in \Z[x]$ are polynomials, then the function
\begin{align*}
F(n):= \sum_{a=1}^n f((P_1(a),n),\ldots,(P_r(a),n))
\end{align*}
is multiplicative in the single variable $n$. Here $F(n)$ is not a special case our sum $S$, but it can be treated in a similar way. By using
\eqref{key_id} one obtains the formula
\begin{equation} \label{F_convo}
F(n)= n \sum_{d_1\mid n,\ldots, d_r\mid n} \frac{(\mu_r \ast_r f)(d_1,\ldots,d_r)}{[d_1,\ldots,d_r]} N(d_1,\ldots,d_r),
\end{equation}
valid for any function $f$ of $r$ variables, where $N(d_1,\ldots,d_r)$ is the number of solutions (mod $[d_1,\ldots,d_r]$) of the simultaneous
congruences $P_1(x)\equiv 0$ (mod $d_1$), ..., $P_r(x)\equiv 0$ (mod $d_r$). Note that $N(d_1,\ldots,d_r)$ is a multiplicative function of $r$ variables.
If $f$ is multiplicative, then the convolution representation \eqref{F_convo} shows that $F$ is also multiplicative.
}
\end{remark}

In what follows assume that

(1') $k,\ell \ge 0$ are fixed integers, not both zero;

(2') $n,r_i,s_i,t_j\in \N$ are integers such that $n\mid r_i$, $s_i\mid r_i$, $n\mid t_j$ ($1\le i\le k$, $1\le j\le \ell$);

(3') $g:\N \to \C$ is an arbitrary arithmetic function;

(4') $P_i,Q_j\in \Z[x]$ are arbitrary polynomials ($1\le i\le k$, $1\le j\le \ell$).

Consider the sum
\begin{equation*}
T:= \sum_{\substack{1\le a_i\le r_i\\ (a_i,s_i)=1\\ 1\le i\le k}} \sum_{\substack{1\le b_j\le t_j\\ 1\le j\le \ell}}
g((P_1(a_1), \ldots,P_k(a_k), Q(b_1),\ldots,Q_{\ell}(b_{\ell}),n)),
\end{equation*}
with the gcd on the right hand side.

We have the following result.

\begin{theorem} \label{Theorem_g}  Assume conditions (1')-(4'). Then
\begin{equation*}
T =  r_1\cdots r_k t_1\cdots t_{\ell} \sum_{d\mid n} \frac{(\mu\ast g)(d)}{d^{k+\ell}}
\left(\prod_{1\le i\le k}  \widehat{N}_{P_i}(d,s_i) \beta(s_i,d) \right) \left(\prod_{1\le j  \le \ell} N_{Q_j}(d)\right),
\end{equation*}
where
\begin{equation*}
\beta(s_i,d)= \prod_{\substack{p\mid s_i\\ p\nmid d}} \left(1-\frac1{p}\right)
\end{equation*}
\end{theorem}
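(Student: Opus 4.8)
The plan is to reduce Theorem \ref{Theorem_g} to the single-variable analogue of the identity \eqref{eq:princ}, in exactly the same spirit as the proof of Theorem \ref{Theorem_main}, but now exploiting the fact that the gcd inside $g$ is taken over \emph{one} common modulus $n$. First I would write, for the single-variable function $g$ and any $N\in\N$,
\begin{equation*}
g(N)=\sum_{d\mid N}(\mu\ast g)(d),
\end{equation*}
which is just the Möbius inversion $g=\zeta\ast(\mu\ast g)$. Applying this with $N=(P_1(a_1),\ldots,P_k(a_k),Q_1(b_1),\ldots,Q_\ell(b_\ell),n)$ and interchanging the order of summation, the sum $T$ becomes
\begin{equation*}
T=\sum_{d\mid n}(\mu\ast g)(d)\Big(\prod_{i=1}^{k}\sum_{\substack{1\le a_i\le r_i\\ (a_i,s_i)=1\\ P_i(a_i)\equiv 0\,(\mathrm{mod}\ d)}}1\Big)\Big(\prod_{j=1}^{\ell}\sum_{\substack{1\le b_j\le t_j\\ Q_j(b_j)\equiv 0\,(\mathrm{mod}\ d)}}1\Big),
\end{equation*}
where the key point is that the constraint $d\mid (P_1(a_1),\ldots,Q_\ell(b_\ell),n)$ forces $d\mid n$ and simultaneously $d\mid P_i(a_i)$ for all $i$ and $d\mid Q_j(b_j)$ for all $j$, and since these conditions involve disjoint sets of variables the inner sum factors as a product.

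Next I would evaluate each inner sum exactly as in the proof of Theorem \ref{Theorem_main}. For the $a_i$-sums I invoke Lemma \ref{Lemma_cong}: for each residue class $x$ (mod $d$) with $(x,d,s_i)=1$ there are $\tfrac{r_i}{d}\beta(s_i,d)$ values of $a_i$ in $[1,r_i]$ with $(a_i,s_i)=1$ and $a_i\equiv x$ (mod $d$) — here I use $d\mid n\mid r_i$ and $s_i\mid r_i$ so the hypotheses of the lemma are met — and there are $\widehat N_{P_i}(d,s_i)$ admissible residue classes, so the $i$-th factor equals $\tfrac{r_i}{d}\beta(s_i,d)\widehat N_{P_i}(d,s_i)$. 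For the $b_j$-sums there is no coprimality restriction and $d\mid n\mid t_j$, so the count is $\tfrac{t_j}{d}N_{Q_j}(d)$. Substituting these back and pulling the factors $r_1\cdots r_k t_1\cdots t_\ell$ out front, with $d^{k+\ell}$ collected in the denominator, yields precisely the claimed formula.

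I do not expect a genuine obstacle here: the argument is a transcription of the proof of Theorem \ref{Theorem_main} with $k+\ell$ independent moduli $d_1,\ldots,d_k,e_1,\ldots,e_\ell$ all collapsed to the single modulus $d\mid n$, which is why the multivariable Möbius function $\mu_{k+\ell}$ is replaced by the one-variable $\mu\ast g$ and the product $d_1\cdots d_k e_1\cdots e_\ell$ by $d^{k+\ell}$. The only point that requires a line of care is the justification that one may indeed split the sum over $(P_1(a_1),\ldots,Q_\ell(b_\ell),n)$ into the condition $d\mid n$ together with the separate divisibility conditions on each $P_i(a_i)$ and $Q_j(b_j)$ — that is, that $d$ divides a gcd iff it divides each entry — and then that the resulting multiple sum factors over the disjoint blocks of variables. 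Once that bookkeeping is in place, the rest is the same Lemma \ref{Lemma_cong} computation already carried out above, so the proof is short.
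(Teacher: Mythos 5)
Your argument is correct, but it is not the route the paper takes. The paper proves Theorem \ref{Theorem_g} in two lines as a corollary of Theorem \ref{Theorem_main}: it sets $m_i=n_j=n$ and $f(x_1,\ldots,x_k,y_1,\ldots,y_\ell)=g((x_1,\ldots,x_k,y_1,\ldots,y_\ell))$, and then invokes the principal-function identity \eqref{eq:princ} to see that $\mu_{k+\ell}\ast_{k+\ell}f$ is supported on the diagonal, where it equals $\mu\ast g$; consequently the multiple sum over $d_1\mid m_1,\ldots,e_\ell\mid n_\ell$ in Theorem \ref{Theorem_main} collapses to a single sum over $d\mid n$ with all $d_i=e_j=d$, which immediately produces the stated formula with $d^{k+\ell}$ in the denominator. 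You instead bypass Theorem \ref{Theorem_main} entirely and redo its proof in the one-modulus setting: a one-variable M\"obius inversion $g(N)=\sum_{d\mid N}(\mu\ast g)(d)$ applied to $N=(P_1(a_1),\ldots,Q_\ell(b_\ell),n)$, the observation that $d\mid N$ splits into the independent conditions $d\mid n$, $d\mid P_i(a_i)$, $d\mid Q_j(b_j)$, factorization over the disjoint blocks of variables, and the same counts $\frac{r_i}{d}\beta(s_i,d)\widehat N_{P_i}(d,s_i)$ and $\frac{t_j}{d}N_{Q_j}(d)$ from Lemma \ref{Lemma_cong} (with $d\mid n\mid r_i$ and $d\mid n\mid t_j$ correctly checked). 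The underlying computation is identical, so nothing is lost; what your version buys is self-containedness and avoidance of the multivariable principal-function formalism, while the paper's version buys brevity by reusing the general theorem already established. One small point worth making explicit in your write-up, which the paper also uses tacitly, is the associativity of the gcd, i.e.\ that $((P_1(a_1),n),\ldots,(Q_\ell(b_\ell),n))=(P_1(a_1),\ldots,Q_\ell(b_\ell),n)$, which is what makes the two formulations of the summand agree.
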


\begin{proof} Apply Theorem \ref{Theorem_main} in the case when $m_i=n_j=n$ ($1\le i\le k$, $1\le j\le \ell$) and
\begin{equation*}
f(x_1,\ldots,x_k,y_1,\ldots,y_{\ell}) = g((x_1,\ldots,x_k,y_1,\ldots,y_{\ell})).
\end{equation*}

Then
\begin{equation*}
f((P_1(a_1),m_1), \ldots,(P_k(a_k),m_k), (Q(b_1),n_1),\ldots,(Q_{\ell}(b_{\ell}),n_{\ell}))
\end{equation*}
\begin{equation*}
=  g((P_1(a_1), \ldots,P_k(a_k),Q(b_1),\ldots,Q_{\ell}(b_{\ell}),n)).
\end{equation*}

From (\ref{eq:princ}) we obtain
\begin{equation*}
(\mu_{k+\ell}\ast_{k+\ell}f)(x_1,\ldots,x_k,y_1,\ldots,y_{\ell})= \begin{cases} (\mu\ast g)(n), & \text{ if $x_1=\cdots = x_k=y_1=\ldots=y_{\ell}=
n$,}\\ 0, & \text{ otherwise.}
\end{cases}
\end{equation*}
\end{proof}

In the special case $g(n)=n$, $Q_j(x)=x$, $r_i=s_i=t_j=n$ ($1\le i\le k$, $1\le j\le \ell$) we obtain from Theorem \ref{Theorem_g} the next result.

\begin{cor}
\begin{equation*}
\sum_{\substack{1\le a_i\le n\\ (a_i,n)=1\\ 1\le i\le k}}
\sum_{\substack{1\le b_j\le n\\ 1\le j\le \ell}} (P_1(a_1), \ldots,P_k(a_k),b_1,\ldots,b_{\ell},n)
 =  \varphi(n)^{k} (\id_\ell\ast G_k)(n),
\end{equation*}
where
\begin{equation*}
G_k(n)=\varphi(n)^{1-k}\prod_{1\le i\le k}  \widehat{N}_{P_i}(n).
\end{equation*}
\end{cor}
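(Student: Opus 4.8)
The plan is to derive this as a direct specialization of Theorem \ref{Theorem_g}. First I would set $g(n)=n$, $Q_j(x)=x$ for $1\le j\le\ell$, and $r_i=s_i=t_j=n$ for all $i,j$. With these choices the sum $T$ of Theorem \ref{Theorem_g} becomes exactly the double sum on the left-hand side of the Corollary, since $(P_1(a_1),\ldots,P_k(a_k),Q_1(b_1),\ldots,Q_\ell(b_\ell),n)=(P_1(a_1),\ldots,P_k(a_k),b_1,\ldots,b_\ell,n)$. So the content is purely in simplifying the right-hand side.

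Next I would evaluate the three ingredients appearing in Theorem \ref{Theorem_g}'s formula. Since $g(n)=n=\id(n)$, we have $\mu\ast g=\mu\ast\id=\varphi$, so the numerator is $\varphi(d)$. Since $s_i=n$ and $d\mid n$, every prime dividing $s_i$ that does not divide $d$ contributes, so $\beta(s_i,d)=\prod_{p\mid n,\,p\nmid d}(1-1/p)=(\varphi(n)/n)/(\varphi(d)/d)$; taking the product over $i=1,\ldots,k$ gives $(\varphi(n)/n)^k/(\varphi(d)/d)^k$. Also $\widehat N_{P_i}(d,s_i)=\widehat N_{P_i}(d)$ because $d\mid n=s_i$. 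Finally, for $Q_j(x)=x$ the congruence $x\equiv 0\pmod{d}$ has exactly one solution mod $d$, so $N_{Q_j}(d)=1$ and $\prod_j N_{Q_j}(d)=1$.

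Substituting, the prefactor $r_1\cdots r_k t_1\cdots t_\ell=n^{k+\ell}$, and the summand becomes
$$
\frac{\varphi(d)}{d^{k+\ell}}\cdot\frac{(\varphi(n)/n)^k}{(\varphi(d)/d)^k}\prod_{1\le i\le k}\widehat N_{P_i}(d)
=\frac{\varphi(n)^k}{n^k}\cdot\frac{d^{k-1}}{d^{k+\ell}}\,\varphi(d)^{1-k}\prod_{1\le i\le k}\widehat N_{P_i}(d).
$$
Pulling the $n$-independent constant $n^{k+\ell}\cdot(\varphi(n)^k/n^k)=\varphi(n)^k n^\ell$ out front, the sum reduces to $\varphi(n)^k\sum_{d\mid n}(n/d)^\ell\, d^{1-k}\varphi(d)^{1-k}\prod_i\widehat N_{P_i}(d)$. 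With $G_k(d):=\varphi(d)^{1-k}\prod_{i}\widehat N_{P_i}(d)$ this is $\varphi(n)^k\sum_{d\mid n}(n/d)^\ell G_k(d)$; but wait, there is a leftover factor $d^{1-k}$ I must reconcile — in fact $d^{1-k}\varphi(d)^{1-k}=(d/\varphi(d))^{k-1}\cdot\varphi(d)^{2(1-k)}$ is not obviously right, so the step I expect to be the main obstacle is the bookkeeping of powers of $d$: one must check carefully that $\varphi(d)/d^{k+\ell}\cdot(d/\varphi(d))^k = d^{-\ell}\varphi(d)^{1-k}$, which does hold, and hence the summand is exactly $(n/d)^\ell\cdot n^{-\ell}\cdot\varphi(n)^k\cdot\ldots$ — the clean way is to write $\varphi(d)/d^{k+\ell}\cdot(\varphi(n)/n)^k(d/\varphi(d))^k\prod_i\widehat N_{P_i}(d)=(\varphi(n)/n)^k\,d^{-\ell}\,\varphi(d)^{1-k}\prod_i\widehat N_{P_i}(d)$, so that $n^{k+\ell}$ times the sum equals $\varphi(n)^k n^\ell\sum_{d\mid n}d^{-\ell}G_k(d)=\varphi(n)^k\sum_{d\mid n}(n/d)^\ell G_k(d)=\varphi(n)^k(\id_\ell\ast G_k)(n)$, as claimed. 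I would double-check this algebra once more and also confirm $N_{Q_j}(d)=1$ rather than worrying about the coprimality version, since $Q_j(x)=x$ has constant term $0$.
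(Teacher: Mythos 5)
Your derivation is correct and is exactly the route the paper intends: the corollary is stated as the special case $g(n)=n$, $Q_j(x)=x$, $r_i=s_i=t_j=n$ of Theorem \ref{Theorem_g}, and your evaluation of $\mu\ast\id=\varphi$, $\widehat{N}_{P_i}(d,n)=\widehat{N}_{P_i}(d)$, $\beta(n,d)=(\varphi(n)/n)/(\varphi(d)/d)$, $N_{Q_j}(d)=1$ and the ensuing power-of-$d$ bookkeeping (after you correct the momentary $d^{k-1}$ slip to $d^{k}$) reproduces $\varphi(n)^k(\id_\ell\ast G_k)(n)$. You simply supply the computation the paper leaves implicit.
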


If $P_i(x)=x^{q_i}-1$ ($1\le i\le k$), then we obtain

\begin{cor} If $q_i\in \N$ \textup{($1\le i \le k$)}, then
\begin{equation*}
\sum_{\substack{1\le a_i\le n\\ (a_i,n)=1\\ 1\le i\le k}}
\sum_{\substack{1\le b_j\le n\\ 1\le j\le \ell}} (a_1^{q_1}-1,\ldots,a_k^{q_k}-1,b_1,\ldots,b_{\ell},n)
 =  \varphi(n)^{k} (\id_\ell\ast H_k)(n),
\end{equation*}
 where
\begin{equation} \label{G_k_n}
H_k(n)=\varphi(n)^{1-k}\prod_{1\le i\le k} C^{(q_i)}(n),
\end{equation}
$C^{(q_i)}(n)$ being the number of solutions of the congruence $x^{q_i} \equiv 1$ \textup{(mod $n$)}. \end{cor}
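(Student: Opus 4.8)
The plan is to read off this statement as an immediate specialization of the preceding Corollary, in which the polynomials $P_1,\ldots,P_k\in\Z[x]$ are arbitrary. In that Corollary the double sum over the $a_i$ and $b_j$ equals $\varphi(n)^k(\id_\ell\ast G_k)(n)$ with
\[
G_k(n)=\varphi(n)^{1-k}\prod_{1\le i\le k}\widehat N_{P_i}(n).
\]
First I would take $P_i(x)=x^{q_i}-1$ for $1\le i\le k$; then $(P_1(a_1),\ldots,P_k(a_k),b_1,\ldots,b_\ell,n)=(a_1^{q_1}-1,\ldots,a_k^{q_k}-1,b_1,\ldots,b_\ell,n)$, so the left-hand side of the preceding Corollary becomes exactly the sum appearing in the present one.

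It then remains only to rewrite the factors of $G_k$. For $P_i(x)=x^{q_i}-1=-1+x^{q_i}$ the constant term $-1$ is coprime to every $n\in\N$, so by the observation recorded in Section \ref{Sect_Nr_Congr} we have $N_{P_i}(n)=\widehat N_{P_i}(n)$; equivalently, every solution of $x^{q_i}\equiv 1$ (mod $n$) automatically satisfies $(x,n)=1$, since $x\cdot x^{q_i-1}\equiv 1$ (mod $n$). Hence $\widehat N_{P_i}(n)$ is precisely the number $C^{(q_i)}(n)$ of solutions modulo $n$ of $x^{q_i}\equiv 1$ (mod $n$). Substituting $\widehat N_{P_i}(n)=C^{(q_i)}(n)$ into the displayed formula for $G_k$ turns it into $H_k$ as defined in \eqref{G_k_n}, and the claimed identity follows.

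I do not anticipate any genuine obstacle: the entire content is the elementary remark that the congruence $x^{q_i}\equiv 1$ (mod $n$) has no solution with $(x,n)>1$, so the coprimality constraint built into $\widehat N_{P_i}$ is vacuous here. As an alternative route one could bypass the intermediate Corollary altogether and apply Theorem \ref{Theorem_g} directly with $g(x)=x$, $Q_j(x)=x$, $r_i=s_i=t_j=n$ and $P_i(x)=x^{q_i}-1$; but deducing the result from the preceding Corollary is the shortest path.
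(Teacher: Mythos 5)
Your proposal is correct and matches the paper exactly: the paper obtains this corollary by setting $P_i(x)=x^{q_i}-1$ in the preceding corollary and using the observation from Section 2.2 that $\widehat N_{P_i}(n)=N_{P_i}(n)=C^{(q_i)}(n)$ since the constant term $-1$ is coprime to $n$. Nothing further is needed.
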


For $k=1$, \eqref{G_k_n} reduces to identity \eqref{id_Debrecen_paper} by Li, Kim and Qiao \cite{LKQ2019}.

Several other special cases can be discussed. For example, let $\ell=0$. By formula
\eqref{S_special} we have
\begin{equation} \label{V}
V(n_1,\ldots,n_k): = \sum_{\substack{1\le a_i\le n_i\\ (a_i,n_i)=1\\ 1\le i\le k}} f((P_1(a_1),n_1), \ldots,(P_k(a_k),n_k))
\end{equation}
\begin{equation*}
=  \varphi(n_1)\cdots \varphi(n_k) \sum_{\substack{d_i\mid n_i \\1\le i\le k}}
\frac{(\mu_k\ast_k f)(d_1,\ldots,d_k)}{\varphi(d_1)\cdots \varphi(d_k)}
\Big( \prod_{1\le i\le k}  \widehat{N}_{P_i}(d_i) \Big).
\end{equation*}

If $f:\N \to \C$ is multiplicative, then $V(n_1,\ldots,n_k)$ is multiplicative, as well, by Corollary \ref{Cor_multipl_k_ell}.
For prime powers $p^{\nu_1},\ldots,p^{\nu_k}$ the values  $V(p^{\nu_1},\ldots,p^{\nu_k})$ can be computed in the case of special
functions $f$ and special polynomials $P_i$.

We confine ourselves with the case of the lcm function $f(n_1,\ldots,n_k)=[n_1,\ldots,n_k]$ and the polynomials $P_i(x)=x-1$
($1\le i\le k$), included in the next section.

\section{A special case}

In this section we consider the function
\begin{equation*}
W(n_1,\ldots,n_k): = \sum_{\substack{1\le a_i\le n_i\\ (a_i,n_i)=1 \\ 1\le i\le k}} [(a_1-1,n_1), \ldots,(a_k-1,n_k)].
\end{equation*}

\begin{theorem} \label{Th_Menon_lcm} For any $n_1,\ldots,n_k\in \N$,
\begin{equation*}
W(n_1,\ldots,n_k) =  \varphi(n_1)\cdots \varphi(n_k) h(n_1,\ldots,n_k),
\end{equation*}
where the function $h$ is multiplicative, symmetric in the variables and  for any prime powers $p^{\nu_1},\ldots, p^{\nu_k}$ such that
$\nu_1\ge \cdots \ge \nu_t\ge 1$, $\nu_{t+1}=\cdots = \nu_k=0$,
\begin{equation*}
h(p^{\nu_1},\ldots,p^{\nu_k})
\end{equation*}
\begin{equation*}
=  1+(\nu_1+\cdots +\nu_t)+ \sum_{j=1}^{t-1} \frac{(-1)^jp^j}{(p-1)^j(p^j-1)} \Big( \binom{t}{j+1} -
\sum_{\substack{M\subseteq \{1,\ldots,t\}\\ \#M=j+1}} \frac1{p^{j\nu_{\max M}}} \Big).
\end{equation*}
\end{theorem}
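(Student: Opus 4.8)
The plan is to identify $W$ with the special case of the function $V$ from \eqref{V} given by $f(n_1,\ldots,n_k)=[n_1,\ldots,n_k]$ and $P_i(x)=x-1$ \textup{($1\le i\le k$)}. For these polynomials the congruence $x\equiv 1\pmod{d}$ has the single root $x=1$, which is coprime to $d$, so $\widehat{N}_{P_i}(d)=1$ for every $d$, and \eqref{V} collapses to
\[
W(n_1,\ldots,n_k)=\varphi(n_1)\cdots\varphi(n_k)\,h(n_1,\ldots,n_k),\qquad
h(n_1,\ldots,n_k)=\sum_{\substack{d_i\mid n_i\\ 1\le i\le k}}\frac{(\mu_k\ast_k f)(d_1,\ldots,d_k)}{\varphi(d_1)\cdots\varphi(d_k)}.
\]
Here $\mu_k\ast_k f$ is multiplicative, being an $\ast_k$-convolution of multiplicative functions of $k$ variables, and so is its pointwise quotient $g$ by the multiplicative function $(d_1,\ldots,d_k)\mapsto\varphi(d_1)\cdots\varphi(d_k)$; since $h=g\ast_k\zeta_k$, the function $h$ is multiplicative, and it is symmetric because the lcm and $\mu_k$ are. (Equivalently, the multiplicativity of $W$, hence of $h$, follows from Corollary \ref{Cor_multipl_k_ell} with $\ell=0$.) Thus it suffices to evaluate $h$ at prime powers.

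For the prime-power value I would compute $W(p^{\nu_1},\ldots,p^{\nu_k})$ directly. Writing $v_p$ for the $p$-adic valuation and $x_i=\min(v_p(a_i-1),\nu_i)$, we have $(a_i-1,p^{\nu_i})=p^{x_i}$, so the summand is $p^{\max_i x_i}$. From the telescoping identity $p^m=\sum_{y=0}^{m}\varphi(p^y)$ we get $W(p^{\nu_1},\ldots,p^{\nu_k})=\sum_{y\ge 0}\varphi(p^y)\,N_y$, where $N_y$ is the number of admissible tuples $a$ with $\max_i x_i\ge y$. For $y\ge 1$ the complementary count $\#\{a:\max_i x_i\le y-1\}$ factors over the index $i$: component $i$ contributes $\varphi(p^{\nu_i})$ if $\nu_i\le y-1$ and $\varphi(p^{\nu_i})-p^{\nu_i-y}$ if $\nu_i\ge y$. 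Since $p^{\nu_i-y}/\varphi(p^{\nu_i})=1/(p^{y-1}(p-1))$ is independent of $\nu_i$, dividing by $\varphi(p^{\nu_1})\cdots\varphi(p^{\nu_k})$ gives $N_y/\prod_i\varphi(p^{\nu_i})=1-\bigl(1-\tfrac{1}{p^{y-1}(p-1)}\bigr)^{t(y)}$ with $t(y):=\#\{i:\nu_i\ge y\}$; as the bracket vanishes once $y>\nu_1$, this yields
\[
h(p^{\nu_1},\ldots,p^{\nu_k})=1+\sum_{y=1}^{\nu_1}\varphi(p^y)\left(1-\Bigl(1-\tfrac{1}{p^{y-1}(p-1)}\Bigr)^{t(y)}\right).
\]

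Finally I would expand by the binomial theorem and simplify. Using $\varphi(p^y)=p^{y-1}(p-1)$ we have $\varphi(p^y)\bigl(p^{y-1}(p-1)\bigr)^{-j}=\bigl(p^{y-1}(p-1)\bigr)^{-(j-1)}$, so the $j=1$ contributions sum to $\sum_{y=1}^{\nu_1}t(y)=\sum_i\nu_i=\nu_1+\cdots+\nu_t$, while the contributions with $j\ge 2$ give $\sum_{j\ge 1}\frac{(-1)^j}{(p-1)^j}\sum_{y=1}^{\nu_1}\binom{t(y)}{j+1}p^{-j(y-1)}$. Because $\nu_1\ge\cdots\ge\nu_t$, the condition $\nu_i\ge y$ for every $i\in M$ is equivalent to $\nu_{\max M}\ge y$, so $\binom{t(y)}{j+1}=\#\{M\subseteq\{1,\ldots,t\}:\#M=j+1,\ \nu_{\max M}\ge y\}$; interchanging the two summations and summing the geometric series,
\[
\sum_{y=1}^{\nu_1}\binom{t(y)}{j+1}p^{-j(y-1)}=\sum_{\substack{M\subseteq\{1,\ldots,t\}\\ \#M=j+1}}\ \sum_{z=0}^{\nu_{\max M}-1}p^{-jz}=\frac{p^j}{p^j-1}\left(\binom{t}{j+1}-\sum_{\substack{M\subseteq\{1,\ldots,t\}\\ \#M=j+1}}\frac{1}{p^{j\nu_{\max M}}}\right),
\]
and multiplying by $(-1)^j(p-1)^{-j}$ gives exactly the stated expression, the outer sum running only to $j=t-1$ since $\binom{t(y)}{j+1}=0$ once $j+1>t$. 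I expect the only real difficulty to lie in the bookkeeping of the two counting steps — the $y=0$ term, the components with $\nu_i=0$, and the set-up of the telescoping for $p^{\max_i x_i}$ — rather than in anything structurally deep; the rearrangement in the last display is routine once the binomial coefficient is read as a count of $(j+1)$-element subsets.
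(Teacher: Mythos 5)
Your proposal is correct, and while it opens exactly as the paper does --- specializing \eqref{V} with $f=[\,\cdot\,,\ldots,\cdot\,]$, $P_i(x)=x-1$, $\widehat{N}_{P_i}(d)=1$, and getting multiplicativity and symmetry of $h$ from the convolution structure --- the heart of your computation is genuinely different. The paper evaluates $g=\mu_k\ast_k f$ at prime powers by a M\"obius calculation, finding that $g(p^{\nu_1},\ldots,p^{\nu_k})$ vanishes unless all nonzero exponents are equal to some $\nu$, in which case it equals $(-1)^{j-1}\varphi(p^\nu)$ for $j$ the number of such exponents; the subsets $M$ and the alternating signs then enter through the support of $g$ inside the divisor sum defining $h$. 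You instead bypass $g$ entirely and count: writing the summand as $p^{\max_i x_i}$ and telescoping via $p^m=\sum_{y=0}^m\varphi(p^y)$, you reduce to the complementary counts $\#\{a:\max_i x_i\le y-1\}$, which factor over $i$, and the alternating signs arise from the binomial expansion of $1-\bigl(1-\tfrac1{p^{y-1}(p-1)}\bigr)^{t(y)}$. Both routes converge to the same sum over $(j+1)$-element subsets with the identification of $\binom{t(y)}{j+1}$ as a subset count and the same geometric series in $\nu_{\max M}$. Your argument has the merit of being self-contained at the prime-power level (it even gives an independent check of \eqref{V} there, since you compute $W(p^{\nu_1},\ldots,p^{\nu_k})$ from scratch) and the intermediate closed form $h=1+\sum_{y=1}^{\nu_1}\varphi(p^y)\bigl(1-(1-\tfrac1{p^{y-1}(p-1)})^{t(y)}\bigr)$ is arguably cleaner than the subset expansion; the paper's route is shorter once the general machinery of Theorem \ref{Theorem_main} is in place and makes the role of the M\"obius function of $k$ variables explicit. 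All the bookkeeping you flag (the $y=0$ term, components with $\nu_i=0$, the count $p^{\nu_i-y}$ of $a_i\equiv 1\ (\mathrm{mod}\ p^y)$ being automatically coprime to $p$) checks out.
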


\begin{proof}
According to \eqref{V} we have
\begin{equation*}
W(n_1,\ldots,n_k) =  \varphi(n_1)\cdots \varphi(n_k) \sum_{\substack{d_i\mid n_i \\1\le i\le k}}
\frac{(\mu_k\ast_k f)(d_1,\ldots,d_k)}{\varphi(d_1)\cdots \varphi(d_k)},
\end{equation*}
where $f(n_1,\ldots,n_k)=[n_1,\ldots,n_k]$.

Here $W(n_1,\ldots,n_k)$ is multiplicative and we compute the values $W(p^{\nu_1},\ldots,p^{\nu_k})$. Let
$g=\mu_k\ast_k f$, that is,
\begin{equation*}
g(n_1,\ldots,n_k) =  \sum_{d_1\mid n_1,\ldots, d_k\mid n_k} \mu(d_1)\cdots \mu(d_k) \left[n_1/d_1, \ldots, n_k/d_k \right].
\end{equation*}

Then $g$ is multiplicative and for any prime powers $p^{\nu_1},\ldots, p^{\nu_k}$ ($\nu_1,\ldots,\nu_k\ge 0$),
\begin{equation*}
g(p^{\nu_1},\ldots,p^{\nu_k}) =  \sum_{d_1,\ldots, d_k\in \{1,p\}} \mu(d_1)\cdots \mu(d_k) \left[p^{\nu_1}/d_1, \ldots, p^{\nu_k}/d_k \right].
\end{equation*}

Assume that there is $j\ge 1$ such that $\nu_1= \nu_2=  \cdots = \nu_j=\nu > \nu_{j+1}\ge \nu_{j+2}\ge \cdots \ge \nu_m\ge 1$,
$\nu_{m+1}=\cdots =\nu_k=0$. Then we have for any $d_1,\ldots,d_m\in \{1,p\}$, $d_{m+1},\ldots,d_k=1$,
\begin{equation*}
[p^{\nu_1}/d_1,\ldots,p^{\nu_k}/d_k]= \begin{cases} p^{\nu-1}, & \text{ if $d_1=\cdots =d_j=p$,}\\
p^\nu, & \text{ otherwise,}
\end{cases}
\end{equation*}
and
\begin{equation*}
g(p^{\nu_1},\ldots,p^{\nu_k}) = \Big(  p^\nu \sum_{d_1\in \{1,p\}} \mu(d_1) \cdots \sum_{d_j\in \{1,p\}} \mu(d_j) - p^\nu \mu(p)^j +p^{\nu-1} \mu(p)^j \Big)
\end{equation*}
\begin{equation*}
\times \sum_{d_{j+1}\in \{1,p\}} \mu(d_{j+1}) \cdots \sum_{d_m\in \{1,p\}} \mu(d_m) =
\begin{cases} (-1)^{j-1}(p^\nu-p^{\nu-1}), & \text{ if $j=m$,}\\
0, & \text{ otherwise.}
\end{cases}
\end{equation*}

Therefore, since $g$ is symmetric in the variables, we deduce
\begin{equation} \label{val_g}
g(p^{\nu_1},\ldots,p^{\nu_k})=\begin{cases} 1, & \text{ if $\nu_1=\cdots = \nu_k=0$,}\\
(-1)^{j-1} \varphi(p^\nu), & \text{ if a number $j\ge 1$ of $\nu_1,\ldots,\nu_k$ is equal to $\nu \ge 1$, }\\
& \text{ while all others are zero,} \\
0, & \text{ otherwise.}
\end{cases}
\end{equation}

Furthermore, let
\begin{equation*}
h(n_1,\ldots,n_k)= \sum_{d_1\mid n_1, \ldots, d_k\mid n_k} \frac{g(d_1,\ldots,d_k)}{ \varphi(d_1)\cdots \varphi(d_k)},
\end{equation*}
which is also multiplicative and symmetric in the variables. Let $p^{\nu_1}, \ldots, p_k^{\nu_k}$ be any prime powers and
assume, without loss of generality, that for some $t\ge 0$, one has $\nu_1\ge \cdots \ge \nu_t\ge 1$, $\nu_{t+1}=\cdots =
\nu_k=0$.

If $t=0$, then $h(1,\ldots,1)=1$. If $t\ge 1$, then
\begin{equation*}
h(p^{\nu_1},\ldots,p^{\nu_k})= \sum_{d_1\mid p^{\nu_1}, \ldots, d_t\mid p^{\nu_t}}
\frac{g(d_1,\ldots,d_t,1,\ldots,1)}{ \varphi(d_1)\cdots \varphi(d_t)}.
\end{equation*}

Let $d_1=p^{\beta_1},\ldots, d_t=p^{\beta_t}$, with $0\le \beta_1\le \nu_1,\ldots, 0\le \beta_t\le \nu_t$. For any subset $M$ of $\{1,\ldots,t\}$ such that $\#M=j$
($1\le j\le t$) let $\beta_m=\nu$ ($1\le \nu \le \nu_{\max M}$) for every $m\in M$ and $\beta_m=0$ for $m\notin M$. Then, according to \eqref{val_g},
\begin{equation*}
\frac{g(d_1,\ldots,d_t,1,\ldots,1)}{ \varphi(d_1)\cdots \varphi(d_t)} =\frac{(-1)^{j-1}\varphi(p^\nu)}{\varphi(p^\nu)^j}=
\frac{(-1)^{j-1}}{\varphi(p^\nu)^{j-1}}.
\end{equation*}

We deduce that
\begin{equation*}
h(p^{\nu_1},\ldots,p^{\nu_k})= 1+\sum_{j=1}^t \sum_{\substack{M\subseteq \{1,\ldots,t\}\\ \#M=j}}
\sum_{\nu=1}^{\nu_{\max M}} \frac{(-1)^{j-1}}{\varphi(p^\nu)^{j-1}}
\end{equation*}
\begin{equation*}
= 1+\sum_{j=1}^t (-1)^{j-1} \sum_{\substack{M\subseteq \{1,\ldots,t\}\\ \#M=j}}
\sum_{\nu=1}^{\nu_{\max M}} \frac1{\varphi(p^\nu)^{j-1}}.
\end{equation*}

Here, with the notation $A:=\nu_{\max M}$, we have for $j\ge 2$,
\begin{equation*}
K_j:= \sum_{\nu=1}^{\nu_{\max M}} \frac1{\varphi(p^\nu)^{j-1}}= \frac1{(p-1)^{j-1}} \sum_{\nu=1}^A \frac1{p^{(j-1)(\nu-1)}}
\end{equation*}
\begin{equation*}
= \frac{p^{j-1}}{(p-1)^{j-1}(p^{j-1}-1)}\left(1-\frac1{p^{A(j-1)}} \right),
\end{equation*}
and for $j=1$, $K_1=A$.

That is,
\begin{equation*}
h(p^{\nu_1},\ldots,p^{\nu_k})= 1+(\nu_1+\cdots +\nu_t)+ \sum_{j=2}^t \frac{(-1)^{j-1}p^{j-1}} {(p-1)^{j-1}(p^{j-1}-1)}
\sum_{\substack{M\subseteq \{1,\ldots,t\}\\ \#M=j}} \left(1-\frac1{p^{A(j-1)}} \right)
\end{equation*}
\begin{equation*}
= 1+(\nu_1+\cdots +\nu_t)+ \sum_{j=1}^{t-1} \frac{(-1)^jp^j}{(p-1)^j(p^j-1)} \Big( \binom{t}{j+1} -
\sum_{\substack{M\subseteq \{1,\ldots,t\}\\ \#M=j+1}} \frac1{p^{Aj}}\Big).
\end{equation*}
\end{proof}

\begin{cor} \label{Cor_lcm_n} {\rm ($n_1=\cdots =n_k=n$)}
For any $n,k\in \N$,
\begin{equation*}
\sum_{\substack{1\le a_1\le n\\ (a_1,n)=1}} \cdots \sum_{\substack{1\le a_k\le n\\ (a_k,n)=1}}
[(a_1-1,n), \ldots,(a_k-1,n)]
\end{equation*}
\begin{equation*}
=  \varphi(n)^k \prod_{p^\nu \mid \mid  n }
\left( 1+k\nu + \sum_{j=1}^{k-1} (-1)^j \binom{k}{j+1} \frac{p^j}{(p-1)^j(p^j-1)} \left(1-\frac1{p^{\nu j}}\right) \right).
\end{equation*}
\end{cor}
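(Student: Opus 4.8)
The plan is to obtain Corollary~\ref{Cor_lcm_n} as the diagonal special case $n_1=\cdots=n_k=n$ of Theorem~\ref{Th_Menon_lcm}; no genuinely new computation is needed, only a collapse of the subset sum appearing there. First I would apply Theorem~\ref{Th_Menon_lcm} to get
\[
\sum_{\substack{1\le a_i\le n\\ (a_i,n)=1\\ 1\le i\le k}} [(a_1-1,n),\ldots,(a_k-1,n)] = W(n,\ldots,n)=\varphi(n)^k\, h(n,\ldots,n),
\]
with $h$ multiplicative and symmetric in its variables. Since $h$ is multiplicative, $h(n,\ldots,n)=\prod_{p^{\nu}\mid\mid n} h(p^{\nu},\ldots,p^{\nu})$, so it remains to evaluate $h$ at the diagonal prime-power point $(p^{\nu},\ldots,p^{\nu})$ with $\nu\ge 1$.

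Next I would specialize the prime-power formula of Theorem~\ref{Th_Menon_lcm} by taking $\nu_1=\cdots=\nu_k=\nu$. In the notation used there this forces $t=k$ and, crucially, $\nu_{\max M}=\nu$ for \emph{every} nonempty subset $M\subseteq\{1,\ldots,k\}$. Hence for each $j$ with $1\le j\le k-1$ the inner sum over subsets degenerates to a count,
\[
\sum_{\substack{M\subseteq\{1,\ldots,k\}\\ \#M=j+1}}\frac1{p^{j\nu_{\max M}}}=\binom{k}{j+1}\frac1{p^{j\nu}},
\]
while $\nu_1+\cdots+\nu_t=k\nu$. Substituting these into the expression for $h(p^{\nu_1},\ldots,p^{\nu_k})$ gives
\[
h(p^{\nu},\ldots,p^{\nu})=1+k\nu+\sum_{j=1}^{k-1}\frac{(-1)^jp^j}{(p-1)^j(p^j-1)}\,\binom{k}{j+1}\left(1-\frac1{p^{\nu j}}\right),
\]
which is precisely the local factor in the statement of Corollary~\ref{Cor_lcm_n}. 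Taking the product over all $p^{\nu}\mid\mid n$ and multiplying by $\varphi(n)^k$ then yields the claimed identity.

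I do not expect any real obstacle here: the only point requiring attention is that on the diagonal every $\nu_{\max M}$ equals the common exponent $\nu$, so the subset sum of Theorem~\ref{Th_Menon_lcm} reduces to $\binom{k}{j+1}p^{-j\nu}$; everything else is bookkeeping. As a variant one could avoid quoting Theorem~\ref{Th_Menon_lcm} altogether and run the computation directly from formula~\eqref{V} with $f=[\,\cdot\,,\ldots,\cdot\,]$ and $P_i(x)=x-1$, using the values of $g=\mu_k\ast_k f$ established in the proof of that theorem; but reusing Theorem~\ref{Th_Menon_lcm} is the shortest route. (As a sanity check, for $k=1$ the sum in the local factor is empty, giving $h(p^\nu)=1+\nu$ and recovering $W(n)=\varphi(n)\tau(n)$, i.e.\ Menon's identity.)
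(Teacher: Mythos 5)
Your proposal is correct and is exactly the route the paper intends: Corollary \ref{Cor_lcm_n} is stated as the diagonal specialization $n_1=\cdots=n_k=n$ of Theorem \ref{Th_Menon_lcm}, where $t=k$ and $\nu_{\max M}=\nu$ for every subset $M$, so the subset sum collapses to $\binom{k}{j+1}p^{-j\nu}$ as you say. No discrepancy with the paper's (implicit) argument.
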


In the case $k=2$ this gives the formula \eqref{a_b_lcm}, while for $k=1$ we reobtain Menon's identity \eqref{Menon_id}.

\section{Acknowledgment} The second author was supported by the European Union, co-financed by the European
Social Fund EFOP-3.6.1.-16-2016-00004.


\begin{thebibliography}{99}
\bibitem{Apo1976} T.~M.~Apostol, {\it Introduction to Analytic Number Theory}, Springer, 1976.

\bibitem{Coh1949} E.~Cohen, An extension of Ramanujan's sum, {\it Duke Math. J.} {\bf 16} (1949), 85--90.

\bibitem{Coh1956} E.~Cohen, Some totient functions, {\it Duke Math. J.} {\bf 23} (1956), 515--522.

\bibitem{Hau2005} P.~Haukkanen, Menon's identity with respect to a generalized divisibility relation,
{\it Aequationes Math.} {\bf 70}  (2005), 240--246.

\bibitem{Hau2008} P.~Haukkanen, On a gcd-sum function, {\it Aequationes Math.} {\bf 76} (2008), 168--178.

\bibitem{HW1996} P.~Haukkanen and J.~Wang, A generalization of Menon's identity with respect to a set
of polynomials, {\it Portugal. Math.} {\bf 53} (1996), 331--337.

\bibitem{HauWan1997}  P.~Haukkanen and J.~Wang, High degree analogs of Menon's identity, {\it Indian J. Math.}
{\bf 39} (1997), 37--42.

\bibitem{JafMad2017} M.~H.~Jafari and A.~R.~Madadi, On the number of cyclic subgroups of a finite group, {\it
Bull. Korean Math. Soc.} {\bf 54} (2017), 2141--2147.

\bibitem{LiHuKimTaiw} Y.~Li, X.~Hu, and D.~Kim, A Menon-type identity with multiplicative and additive characters,
{\it Taiwanese J. Math.} {\bf 23} (2019), 545--555.

\bibitem{LiKim2017} Y.~Li and D.~Kim, Menon-type identities derived from actions of subgroups of general linear groups, {\it J. Number Theory}
179 (2017) 97--112.

\bibitem{LiKim2018} Y.~Li and D.~Kim, Menon-type identities with additive characters, {\it J. Number Theory} {\bf 192}
(2018), 373--385.

\bibitem{LKQ2019} Y.~Li, D.~Kim, and R.~Qiao, A generalization of Menon's identity to higher exponent, {\it Publ. Math. Debrecen}
{\bf 94} (2019), 467--475.

\bibitem{Men1965} P.~K.~Menon, On the sum $\sum \,(a-1,\,n) [(a,\, n) = 1]$, {\it J. Indian Math. Soc. (N.S.)}
{\bf 29} (1965), 155--163.

\bibitem{Sit1978} V.~Sita Ramaiah, Arithmetical sums in regular convolutions, {\it J. Reine Angew. Math.} {\bf 303/304} (1978), 265--283.

\bibitem{Siv1969} R.~Sivaramakrishnan, Generalization of an arithmetic function, {\it J. Indian Math. Soc. (N.S.)}
{\bf 33} (1969), 127--132.

\bibitem{Sur2009} B.~Sury, Some number-theoretic identities from group actions, {\it Rend. Circ. Mat. Palermo (2)} {\bf
58} (2009), 99--108.

\bibitem{Tot2010} L.~T\'oth, A survey of gcd-sum functions, {\it J. Integer Seq.} {\bf 12} (2010), Article 10.8.1.

\bibitem{Tot2011} L.~T\'oth, Menon's identity and arithmetical sums representing functions of several variables,
{\it Rend. Sem. Mat. Univ. Politec. Torino} {\bf 69} (2011), 97--110.

\bibitem{Tot2014} L.~T\'oth, Multiplicative arithmetic functions of several variables: A survey, in Mathematics Without Boundaries, Surveys in Pure Mathematics, eds. T. M. Rassias and P. M. Pardalos (Springer, 2014), pp. 483--514.

\bibitem{Tot2018} L.~T\'oth, Menon-type identities concerning Dirichlet characters, {\it Int. J. Number Theory}
{\bf 14} (2018), 1047--1054.

\bibitem{Tot2019} L.~T\'oth, Short proof and generalization of a Menon-type identity by Li, Hu and Kim, {\it Taiwanese
J. Math.} {\bf 23} (2019), 557--561.

\bibitem{Tot} L.~T\'oth, Menon-type identities concerning additive characters, {\it Arab. J. Math.}, accepted, 
\url{https://doi.org/10.1007/s40065-019-0259-3}.

\bibitem{Vai1931} R.~Vaidyanathaswamy, The theory of multiplicative arithmetic functions, {\it Trans. Amer. Math. Soc.} {\bf 33} (1931),  579--662.

\bibitem{WZJ2019} Yu-Jie Wang, Xin Zhang, and Chun-Gang Ji, A regular Menon-type identity in residually finite Dedekind domains,
{\it Acta Arith.} {\bf 188} (2019), 111--123.

\bibitem{ZhaCao2017} X.-P.~Zhao and Z.-F.~Cao, Another generalization of Menon's identity, {\it Int. J. Number Theory}
{\bf 13} (2017), 2373--2379.

\end{thebibliography}
\end{document}